\documentclass[a4j,12pt,leqno]{amsart}
\usepackage{amsmath,amssymb}
\usepackage[all,cmtip]{xy}
\usepackage[dvipdfmx]{graphicx}
\usepackage{mathrsfs}
\topmargin=0mm
\textheight=230mm
\textwidth=160mm
\evensidemargin=0mm
\oddsidemargin=0mm
\marginparwidth=0mm
\footskip=8mm
\pagestyle{myheadings}

\makeatletter
\renewcommand{\section}{\@startsection
{section}{1}{0mm}{5mm}{2mm}{\raggedright\bfseries}}
 
  \@addtoreset{equation}{section}
\makeatother

\theoremstyle{plain} 
\newtheorem{theorem}{\indent\sc Theorem}[section] 
\newtheorem{lemma}[theorem]{\indent\sc Lemma}
\newtheorem{corollary}[theorem]{\indent\sc Corollary}
\newtheorem{proposition}[theorem]{\indent\sc Proposition}

\theoremstyle{definition} 
\newtheorem{definition}[theorem]{\indent\sc Definition}
\newtheorem{remark}[theorem]{\indent\sc Remark}


\def\bkappa{{\boldsymbol \kappa}}
\def\cyc{{\mathrm{cyc}}}
\def\pol{{\mathrm{pol}}}
\def\muprime{{\mu(\Z_p^{ur})}}

\def\padic{{p\mathchar`-\mathrm{adic}}}
\def\ur{\mathrm{ur}}
\def\Norm{{\mathcal N}}

\def\proj{{\mathbf{p}}}
\def\bTrace{{\overline{\mathcal S}}}
\def\cLog{{\mathcal L}}
\def\nyoroto{{\rightsquigarrow}}
\def\tilchi{{\tilde{\chi}}}

\def\Li{{\mathrm{Li}}}

\def\ab{{\mathrm{ab}}}

\def\ff{{\frak f}}

\def\la{{\langle}}
\def\ra{{\rangle}}

\def\ovec#1{\overrightarrow{#1}}

\def\isom{\,{\overset \sim \to  }\,}

\def\Z{{\Bbb Z}}

\def\C{{\Bbb C}}

\def\cU{{\mathcal U}}
\def\cL{{\mathcal L}}

\def\cO{{\mathcal O}}
\def\Q{{\Bbb Q}}

\def\Ker{\mathrm{Ker}}

\def\Gal{\mathrm{Gal}}
\def\Hom{\mathrm{Hom}}




\def\ontomap{\twoheadrightarrow}
\def\Q{{\mathop{\mathbb{Q}}\nolimits}}
\def\P{\mathop{\mathbb{P}}\nolimits}
\def\Z{{\mathop{\mathbb{Z}}\nolimits}}
\def\C{\mathop{\mathbb{C}}\nolimits}
\def\Qac{\overline{\mathbb{Q}}}
\def\Ker{\mathop{\mathrm{Ker}}\nolimits}

\def\barX{\mathop{\mathbb P^1_{\overline{\mathbb Q}}\setminus \{0,1,\infty\}}\nolimits}
\def\barGm{\mathop{\mathbb P^1_{\overline{\mathbb Q}}\setminus \{0,\infty\}}\nolimits}
\def\({\mathop{[\![}\nolimits}
\def\){\mathop{]\!]}\nolimits}

\def\B+{\mathop{B^+_{\mathrm{dR}}}\nolimits}

\def\D+{\mathop{D^0_{\mathrm{dR}}}\nolimits}
\def\<{\mathop{\langle}\nolimits}
\def\>{\mathop{\rangle}\nolimits}

\def\RepCrys_F{\mathop{\mathrm{Rep}_{\Q_p}^{\mathrm{crys}}(G_F)}\nolimits}
\def\Rep_F{\mathop{\mathrm{Rep}_{\Q_p}(G_F)}\nolimits}
\def\IRep_F{\mathop{\mathrm{Ind}\mathrm{Rep}_{\Q_p}^{\mathrm{crys}}(G_F)}\nolimits}

\def\Pet#1{\mathop{\mathscr{P}^{\mathrm{et}}_{#1}}\nolimits}

\def\rec{\mathop{\mathrm{rec}}\nolimits}
\makeatother


\begin{document}

\title[Polylogarithmic analogue of the Coleman-Ihara formula, I]
{Polylogarithmic analogue of \\
the Coleman-Ihara formula, I}
\author{Hiroaki Nakamura, Kenji Sakugawa and Zdzis{\l}aw Wojtkowiak}

\subjclass[2010]{Primary 11G55 ; Secondary: 14H30, 11S31, 11R18 }

\address{Hiroaki Nakamura: 
Department of Mathematics, 
    Graduate School of Science, 
    Osaka University, 
    Toyonaka, Osaka 560-0043, Japan}
\email{nakamura@math.sci.osaka-u.ac.jp}

\address{Kenji Sakugawa: 
Department of Mathematics, 
    Graduate School of Science, 
    Osaka University, 
    Toyonaka, Osaka 560-0043, Japan}
\email{sakugawa.kenji@gmail.com}

\address{Zdzis{\l}aw Wojtkowiak: 
Universit\'e de Nice-Sophia Antipolis, D\'ept. of Math., 
Laboratoire Jean Alexandre Dieudonn\'e, 
U.R.A. au C.N.R.S., No 168, Parc Valrose -B.P.N${}^\circ$ 71,
06108 Nice Cedex 2, France}
\email{wojtkow@math.unice.fr}

\begin{abstract}
The Coleman-Ihara formula expresses Soule's $p$-adic characters 
restricted to $p$-local Galois group as
the Coates-Wiles homomorphism multiplied by $p$-adic $L$-values at positive integers.
In this paper, we show an analogous formula that $\ell$-adic
polylogarithmic characters for $\ell=p$ restrict to the Coates-Wiles 
homomorphism multiplied by Coleman's $p$-adic polylogarithms
at any roots of unity of order prime to $p$. 
\end{abstract}

\maketitle
\tableofcontents
\renewcommand{\thefootnote}{}
\footnote{Revised Version, November 16, 2015}

\section{Introduction}
Let $p$ be an odd prime. In his Annals article \cite{I86}, 
Yasutaka Ihara introduced the universal power series 
for Jacobi sums $F_\sigma\in\Z_p[\![u,v]\!]^\times$ 
($\sigma\in G_\Q:=\mathrm{Gal}(\bar\Q/\Q)$) 
and showed a beautiful (local) formula (\cite{I86} Theorem C)
relating its coefficient characters
with $p$-adic $L$-values $L_p(m,\omega^{1-m})$
$(m\ge 3:$ odd) multiplied by the $m$-th Coates-Wiles homomorphisms
when $\sigma$ lies in the $p$-local subgroup of 
$G_{\Q(\mu_{p^\infty})}$. In the last part of \cite{I86} (p.105, (Col2)) 
documented is that Robert Coleman proved that these coefficient characters
are nothing but the restrictions of Soule's cyclotomic elements 
in $H^1(G_{\Q},\Z_p(m))$. This motivated later works by Anderson
\cite{A}, Coleman \cite{C5}, 
Ihara-Kaneko-Yukinari \cite{IKY} toward the explicit (global) 
formula of $F_\sigma$ for all $\sigma\in G_\Q$ (see \cite{I90}).
Their formula presents particularly a remarkable symmetric form 
on the main part $G_{\Q(\mu_{p^\infty})}$ of $G_\Q$ as:
\begin{equation}
\label{IKY}
F_\sigma(u,v)=
\exp
\left(
\sum_{\substack{m\ge 3 \\ odd}}
\frac{\chi_m(\sigma)}{p^{m-1}-1}
\sum_{\substack{i+j=m \\ i,j\ge 1}} 
\frac{U^iV^j}{i!j!}
\right)
\qquad
(\sigma\in G_{\Q(\mu_{p^\infty})}),
\end{equation}
where $1+u=e^U$, $1+v=e^V$ and 
$\chi_m:G_{\Q(\mu_{p^\infty})}\to \Z_p(m)$ is the $m$-th
Soule character (\cite{Sou}) defined by the properties:
\begin{equation}
\label{Soule}
\bigl(
\prod_{\substack{1\le a<p^n \\ p\nmid a}}
(1-\zeta_{p^n}^a)^{a^{m-1}}
\bigr) ^{\frac{1}{p^n}(\sigma-1)}
=
\zeta_{p^n}^{\chi_m(\sigma)}
\quad (n\ge 1).
\end{equation}
This, together with the above mentioned Ihara's local formula 
\cite{I86} Theorem C, implies Coleman's formula presented
in \cite{I86} p.105 
in the form:
\begin{eqnarray}
\label{FCW}
\frac{\chi_m(\rec(\epsilon))}{(p^{m-1}-1)}
=L_p(m,\omega^{1-m})\phi^{CW}_{m}(\epsilon)
\qquad (\epsilon\in \mathcal{U}_\infty)
\label{eq0.1}
\end{eqnarray}
for $m\geq 3:$ odd. 
Let us quickly explain the notation used here: 
For each $n\ge 1$, we denote by 
${\mathcal U}_n$ the group of principal units of 
$\Q_p(\mu_{p^n})$ and by ${\mathcal U}_\infty=\varprojlim_{n}\mathcal U_n$ 
their norm limit. Let $\Omega _p$ be the maximal abelian 
pro-$p$ extension of $\Q(\mu_{p^\infty})$ unramified outside $p$.
Then, Ihara's power series $\sigma\mapsto F_\sigma(u,v)$ 
factors through $\mathrm{Gal}(\Omega_p/\Q)$.
Now, fix an embedding 
$\Qac\hookrightarrow \overline{\Q_p}$ and 
a coherent system of $p$-power roots of unity $\{\zeta_{p^n}\}_{n\ge 1}$
to identify $\Z_p$ with $\Z_p(m)$. 
This embedding and the local class field theory induce the canonical 
homomorphism $\rec:{\mathcal U}_\infty\hookrightarrow 
\mathrm{Gal}(\Omega_p/\Q(\mu_{p^\infty}))$ called the reciprocity map.
On the other side, the system $\{\zeta_{p^n}\}_n$ determines, for $m\geq 1$, 
the Coates-Wiles homomorphism
$\phi^{CW}_{m}:\mathcal{U}_{\infty}\rightarrow \Z_p$.
The coefficient $L_p(m,\omega^{1-m})$ is the Kubota-Leopoldt 
$p$-adic $L$-value at $m$ with respect to the power of 
the Teichm\"uller character $\omega$.

Indeed, Coleman's paper \cite{C5} proves (\ref{eq0.1}) by applying his theory on 
Hilbert norm residue symbols [C1,2,4] to Jacobi sums which are 
special values $F_\sigma(\zeta_{p^n}^a-1,\zeta_{p^n}^b-1)$ 
at Frobenius elements $\sigma$ over various primes 
in $\Q(\mu_{p^n})$ not dividing $p$. Especially, it relies on the
Tchebotarev density. Consequently, the formula (\ref{FCW}) results
from combination of \cite{C5} and (\ref{IKY}), 
relying on global arithmetic nature of $F_\sigma(u,v)$. 
The global proof certainly enables us to highlight (\ref{FCW}) 
in contexts enriched with many important materials 
of Iwasawa theory (see, e.g., \cite{I-S} \S 3-1). 
However, (\ref{FCW}) itself is essentially of local nature,
concerning the ratio between
the Coates-Wiles homomorphism 
and Soule's character restricted on the local Galois group; 
Passing through the Jacobi sum interpolation 
properties of $F_\sigma(u,v)$ to derive (1.3)
should look rather roundabout.

The purpose of this paper is to give an alternative direct proof of
(\ref{FCW}) and its polylogarithmic variants, where the 
Soule's characters $\chi_m$ in LHS are generalized to the ($l$-adic) 
Galois polylogarithms $\ell i_m(z,\gamma)$ 
(for the case $l=p$)
introduced in [W1-2].
They are defined as certain coefficients of Galois transforms of a 
defining path $\gamma$ from $\overrightarrow{01}$ to $z$ 
on $\P^1\setminus\{0,1,\infty\}$ (cf.\,\S \ref{section2.1}\,(\ref{eq2.4})).
The values $L_p(m,\omega^{1-m})$ in RHS have obvious generalization 
to $\mathrm{Li}^\padic_m(z)$, the $p$-adic polylogarithms 
of Coleman (see \cite{C2}). 
As a corollary of our main formula (Theorem \ref{fullformula}), 
we obtain:

\begin{theorem}[Corollary \ref{maincor}, Proposition \ref{m=1},
Remark \ref{extensiontoG_F}]
\label{mainthm}
Let $p$ be an odd prime, and $F$ a finite unramified extension of 
$\Q_p$ with 
the Frobenius substitution $\sigma_F\in\Gal(F/\Q_p)$.
Let $F_\infty:=F(\mu_{p^\infty})$ and denote by 
$\phi^{CW}_{m,F}:G_{F_\infty}\to F\otimes\Z_p(m)$ 
the $m$-th Coates-Wiles homomorphism for the local field $F$
(cf.\,Definition \ref{DefCW}).
Then, for any root of unity $z$ contained in $F$,
there is a standard specific path $\ovec{01}\nyoroto z$
such that 
\begin{eqnarray}
\label{maineqn} \label{eq4.2.9}
\ell i_m(z,\gamma)(\sigma)=
\frac{-1}{(m-1)!}
\mathrm{Tr}_{F/\Q_p}
\left(
\left\{
\left(
1-\frac{\sigma_F}{p^m}
\right)
\mathrm{Li}^\padic_m(z)
\right\}
\phi^{CW}_{m,F}(\sigma)
\right)
\end{eqnarray}
holds for 
$m\ge 1$ and 
for $\sigma\in G_{F_\infty}$. 
\end{theorem}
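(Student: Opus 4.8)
The plan is to establish the formula by comparing two explicit cocycles attached to the pro-$p$ fundamental group of $\barX$ evaluated at the tangential–to–$z$ path $\gamma$: the étale (Galois) side, which produces the polylogarithmic characters $\ell i_m(z,\gamma)$, and the de Rham / crystalline side, which produces Coleman's $p$-adic polylogarithms $\mathrm{Li}^{\padic}_m(z)$. The bridge between them is $p$-adic Hodge theory for the unipotent path torsor, following the Deligne–Goncharov / Besser–Furusho picture: the Frobenius operator on the crystalline realization acts by the block-triangular matrix whose entries are $p$-adic polylogarithms, and the factor $1-\sigma_F p^{-m}$ in \eqref{eq4.2.9} is exactly the eigenvalue discrepancy $1-\varphi^{?}$ forced by weights when one passes from the crystalline Frobenius to the Galois action restricted to $G_{F_\infty}$ — this is the local analogue of the Euler factor that turns plain $L$-values into $p$-adic $L$-values in the classical Coleman–Ihara formula \eqref{FCW}.

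Concretely, I would proceed in the following steps. First, reduce to the inertia/local setting: since both sides of \eqref{eq4.2.9} are, by construction, functions on $G_{F_\infty}$, and the Coates-Wiles homomorphism $\phi^{CW}_{m,F}$ kills everything except the image of the local units $\mathcal U_\infty\otimes\cO_F$ under the reciprocity map, it suffices to test the identity against $\rec(\epsilon)$ for $\epsilon\in\mathcal U_\infty$ (and separately handle the piece detected by the cyclotomic quotient, which is where the $m=1$ case of Proposition~\ref{m=1} and Remark~\ref{extensiontoG_F} enter). Second, write down the Galois cocycle: the Galois transform $\sigma\cdot\gamma\circ\gamma^{-1}$ lies in the pro-$p$ fundamental group, and its image in the degree-$m$ graded piece of the lower central series, paired against the standard Lie word, is precisely $\ell i_m(z,\gamma)(\sigma)$ as in \S\ref{section2.1}\,\eqref{eq2.4}. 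Third, invoke the comparison isomorphism between this étale torsor (over $F$, which is crystalline since $F/\Q_p$ is unramified and $z$ is a prime-to-$p$ root of unity, so the relevant object has good reduction) and the filtered $\varphi$-module whose Frobenius matrix has $\mathrm{Li}^{\padic}_k(z)$ on the appropriate off-diagonal. Fourth, compute the local Galois cohomology class: the extension class in $H^1(G_{F_\infty}, F\otimes\Z_p(m))$ that $\ell i_m(z,\gamma)$ represents is, via the Bloch–Kato exponential and the explicit reciprocity law (Coleman's theory, or Kato's), pulled back to a class pairing with $\phi^{CW}_{m,F}$ against the Dieudonné-module datum $\{(1-\sigma_F p^{-m})\mathrm{Li}^{\padic}_m(z)\}$; the $-1/(m-1)!$ is the normalization constant relating the Lie-word coefficient to the standard basis of $\mathrm{Sym}$ appearing in \eqref{IKY}.

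The main obstacle, and the step that deserves the most care, is the third-to-fourth transition: making the $p$-adic Hodge-theoretic comparison \emph{explicit} enough to extract the exact Euler factor $1-\sigma_F/p^m$ and the exact constant, rather than merely "up to a unit". This requires a careful bookkeeping of the Frobenius eigenvalues on the graded pieces of the unipotent fundamental group (weight $2k$ in degree $k$, hence Frobenius eigenvalue $p^{-k}$ after the Tate twists are accounted for), together with the precise comparison between Coleman's $p$-adic polylogarithm — defined by Coleman integration / analytic continuation along Frobenius — and the de Rham period of the relevant one-extension. I expect to handle this by first proving the formula for $z=1$ (where $\mathrm{Li}^{\padic}_m(1)=(1-p^{m-1})^{-1}\zeta_p(m)$ recovers \eqref{FCW} and serves as a consistency check against the known Coleman–Ihara result), then propagating to general prime-to-$p$ roots of unity $z\in F$ by the distribution / functional-equation relations satisfied by both $\ell i_m(z,\gamma)$ and $\mathrm{Li}^{\padic}_m$, which are compatible because they descend from the same motivic identities on $\barX$. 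The case $m=1$ is genuinely special — $\mathrm{Li}^{\padic}_1(z)=-\log^{\padic}(1-z)$ and the Frobenius factor becomes $1-\sigma_F/p$ — and is treated separately in Proposition~\ref{m=1}.
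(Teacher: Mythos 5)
Your route is not the one taken in this paper: here the proof is carried out entirely within abelian Coleman theory --- the explicit series $f_{z,c}(T)$ of \S 3 whose integral logarithm recovers the ($c$-corrected) Koblitz power series ${\mathscr F}_z^{(p)}$ (Lemma \ref{lem3.1}), Coleman's explicit reciprocity law for the Hilbert symbol (Theorem \ref{ColemanFormula}) together with the operators $D^{-k}$ and $\omega_n(m-1)$, and the Kummer-theoretic description of $\ell i_m$ via $\tilchi^z_m$, $\chi^z_m$ (Proposition \ref{prop2.2}, Lemma \ref{tilchi.and.chi}) --- whereas the crystalline/Tannakian comparison of the path torsor that you propose is precisely what the authors defer to Part II and is close to the syntomic treatment of Gros--Kurihara cited in the Introduction. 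More importantly, as a proof your plan has genuine gaps. The step you yourself single out as ``the main obstacle'' --- extracting the exact Euler factor $1-\sigma_F/p^m$ and the constant $-1/(m-1)!$ from the \'etale--crystalline comparison --- is the entire content of the theorem, and you give no mechanism for it: in the unipotent setting this requires an explicit reciprocity law for the torsor itself (the Kim--Olsson--Sakugawa machinery of Part II), not merely the abelian Bloch--Kato exponential; moreover, for $m\ge 2$ the quantity $\ell i_m(z,\gamma)$ is only a $1$-cochain on $G_F$, so even identifying its restriction to $G_{F_\infty}$ with a class in $H^1(F,F\otimes\Z_p(m))$ to which such a pairing could be applied already needs the argument of Remark \ref{extensiontoG_F}, which your step four silently assumes.

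The second gap is the proposed propagation from $z=1$ to general prime-to-$p$ roots of unity by ``distribution / functional-equation relations.'' The distribution relations satisfied by both $\ell i_m$ and $\Li^\padic_m$ only relate the value at $z^N$ to the sum over all $N$-th roots $w$ of $z^N$; starting from $z=1$ they yield the identity only after averaging over all $N$-th roots of unity, and no iteration of them isolates an individual primitive $N$-th root. Separating individual values would require character-twisted analogues of the formula, which is essentially the general statement you are trying to prove. The paper goes in the opposite direction: Proposition \ref{prop3.1} and Theorem \ref{fullformula} are proved directly for every $z\in F\cap\muprime$, with $z=1$ the delicate special case (pole of ${\mathscr F}_1^{(p)}$, the $c$-correction, Lemma \ref{lem3.3}), and the original Coleman--Ihara formula (\ref{FCW}) is recovered as a corollary (Remark \ref{finalremark}) rather than used as input. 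Finally, the case $m=1$ cannot be absorbed into the same torsor computation; the paper treats it separately via Sen's explicit reciprocity law (Proposition \ref{m=1}), which your sketch acknowledges but does not supply.
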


Our approach taken in this Part I is to 
apply the theory of Coleman power series 
(in a direct but different way from [C5]) to 
the so called Koblitz measure that produces 
$\Li_m^\padic(z)$ ([Ko]) on one hand, 
and on another hand, to the explicit formula 
of $\ell i_m(z)$ (\cite{N-W1}) generalizing 
the above Soule's characters (\ref{Soule}).
After the Introduction, in \S 2, we shall recall 
basic setup for Galois polylogarithms
and $p$-adic polylogarithms, and in \S 3, 
we introduce and study a special family of Coleman power series that 
bridges these two kinds of polylogarithms through
Coleman's reciprocity law.
In \S 4, we present a general formula for $p$-adic
polylogarithmic characters on the image 
of $\cU_\infty(F)\overset{\mathrm rec}{\longrightarrow} G_{F_\infty}^\ab$ 
(Theorem \ref{fullformula})
and prove
Theorem \ref{mainthm}.
A (direct) proof of the original Coleman-Ihara formula
(\ref{FCW}) is also obtained as a special case of 
$z=1$, $F=\Q_p$ (Remark \ref{finalremark}).

In a subsequent Part II \cite{NSW}, we study a generalization of 
the above formula to the case of more general 
$z\in (\mathbb P^1\setminus\{0,1,\infty\})({\Z_p^{ur}})$.
For the generalization, we switch over to a view on the
polylogarithmic torsors of paths in Deligne's Tannakian
approach [De], and apply a non-commutative lift of 
Bloch-Kato's explicit reciprocity law that reverses 
logarithmic mapping of torsors studied by M.Kim \cite{Kim}, 
M.Olsson \cite{Ol}. Here, we partially rely on \cite{Sak} 
for some technical details. 

We note that Kurihara \cite{Ku} and Gros \cite[II]{Gr} \S IV
gave an alternative local approach to the essentially same formula
as our above Theorem \ref{mainthm} by using syntomic 
cohomology. See \cite{Ku} (2.11),\,(2.12) for the case of $m<p-2$
and a comment in (2.15) on
extension to general case of $m>1$
(in (2.12) of loc.\,cit.,\,``$\mathrm{Tr}_{F/\Q_p}$'' seems lacking in print). 
Compared to the Gros-Kurihara formulation,
our approach in the present paper is of more elementary 
nature and may be useful to find a source reason behind 
the formula (\ref{maineqn}) in certain explicit Coleman 
power series $f_{z,c}(T)$ given in \S 3 below.
Combined with illustrations in loc.\,cit.\,and \cite{KN} 
(in particular, p.425), our above result suggests 
that the $l$-adic Galois polylogarithm 
$\ell i_m(z,\gamma)$ stands nearby a shadow of 
Beilinson's cyclotomic element in $K_{2m-1}(F)\otimes \Q$
at least when $z$ is a root of unity
of order prime to $p$.

\medskip
{\bf Notation: }
In this paper, we let $p$ be a fixed odd prime. 
We fix embeddings $\Qac\hookrightarrow \mathbb{C}$ and 
$\Qac\hookrightarrow \overline{\Q_p}$, which determines
a coherent system of roots of unity 
$\zeta_n\in\mu_n\subset\Qac$ with $\zeta_n=\exp(2\pi i/n)$.
Write $\Z_p^{ur}$ for the ring of integers of the maximal
unramified extension $\Q_p^{ur}$ of $\Q_p$.
The group of roots of unity $\muprime$ is isomorphic to
$\bigcup_{p\nmid N}\mu_N$.

For any unramified extension $F$ of $\Q_p$, we denote the arithmetic 
Frobenius automorphism on $F$ by $\sigma_F:F\to F$. 
We shall use the notation $F_\infty$ to denote 
$F(\mu_{p^\infty})$, while we prefer in this paper
to keep ``$F_n$'' unused to avoid confusion 
(with its custom usage 
`$F_n=F(\mu_{p^{n+1}})$' in Iwasawa theory.) 
We define $\mathcal U_\infty(F)$ 
to be the the norm limit of the 
group of principal units of $F(\mu_{p^n})$. 
The Galois group $\Gal(F_\infty/F)$ 
will be written as $G_\infty$.

For any local field $F/\Q_p$, we write $\cO_F$, $k_F$ for the
ring of integers and its residue field respectively.
For any field $F$, we denote by $G_F$ the absolute 
Galois group of $F$, and by $\chi_\cyc$ the 
$p$-adic cyclotomic character 
$G_F\to \Z_p^\times$.

The Bernoulli polynomials and Bernoulli numbers are given by 
$\sum_{n=1}^\infty B_n(X)t^n/n!=te^{Xt}/(e^t-1)$ and 
$B_n:=B_n(0)$. 

\section{Review of Galois and $p$-adic polylogarithms}

\subsection{Galois polylogarithms}\label{section2.1}
We review $\ell$-adic Galois polylogarithms 
([W1-2]) in the case $\ell =p$. 
Let $\overrightarrow{01}$ be the unit tangential base point
on $\mathbb{P}^1-\{0,1,\infty\}$. We denote 
by $\pi_1$ the maximal pro-$p$ quotient of the 
etale fundamental group 
$\pi_1^{\mathrm{et}}(\barX,\overrightarrow{01})$
and identify $\Z_p(1)$ with that of 
$\pi_1^{\mathrm{et}}(\barGm,\overrightarrow{01})$.
We will always regard these fundamental groups 
as equipped with the actions of $G_\Q$ 
determined by $\ovec{01}$. 

Let $\proj:\pi_1\rightarrow \Z_p(1)$ be the projection homomorphism.
We shall focus on the structure of the quotient group
$\pi_1^\pol:=\pi_1/[\Ker\proj,\Ker\proj]$ 
(the pro-$p$ polylogarithmic quotient)
that has the induced projection 
$\proj':\pi_1^\pol\rightarrow \Z_p(1)$.
By construction, $\Ker\proj'$ is abelian, hence has a conjugate 
action of $\mathrm{Im}(p)=\Z_p(1)$.
In fact, as discussed in \cite[Section 16.11-14]{De}, 
it forms a free $\Z_p\( \Z_p(1)\)$-module generated by 
a generator $y$ of the inertia subgroup over the puncture $1$:
There arises a Galois equivariant exact sequence:
\begin{equation}
0\rightarrow \Z_p\({\Z_p(1)}\)\cdot y
\rightarrow \pi_1^\pol 
\xrightarrow{\proj'} \Z_p(1)\rightarrow 0.
\end{equation}
The sequence turns out to split, as the image 
$\mathrm{Im}(\proj')=\Z_p(1)$ 
can be lifted to a Galois-invariant inertia subgroup over $0$ 
along $\overrightarrow{01}$.
We shall take standard generators $x$, $y$ of those respective
inertia subgroups over $0$, $1$ so as to correspond to our choice
of $\{\zeta_{p^n}\}_n\in\mu_{p^\infty}$. 
It follows then that $\pi_1^\pol$ is isomorphic to 
a semi-direct product:
\begin{equation}
\label{semi-direct}
\pi_1^\pol
=x^{\Z_p}\ltimes \bigl(
\Z_p\(x^{\Z_p}\)\cdot y \bigr)
\cong \Z_p(1)\ltimes \bigl( \Z_p\( \Z_p(1)\)(1) \bigr).
\end{equation}
Here, the action in the last semi-direct product 
is given by the translation of $\Z_p(1)$. 
Let $\Pet{}(\ovec{01})$ 
be the pro-unipotent completion of $\pi_1^\pol$
(which we call the {\it $p$-adic etale polylogarithmic group}), 
and let $\log: \Pet{}(\ovec{01}) \to \mathrm{Lie}(\Pet{}(\ovec{01}))
=\Q_p(1)\oplus \prod_{k=1}^\infty \Q_p(k)$ its logarithm map.
We have then the following Lie expansion map (also denoted $\log$):
\begin{equation}
\log: 
\pi_1^\pol
\hookrightarrow 
\Pet{}(\ovec{01})
\longrightarrow \mathrm{Lie}(\Pet{}(\ovec{01}))
=\Q_p(1)\oplus \prod_{k=1}^\infty \Q_p(k).
\end{equation}
In practice, both of $\Pet{}(\ovec{01})$ and 
$\mathrm{Lie}(\Pet{}(\ovec{01}))$ 
are realized as subsets of
the non-commutative power series ring 
$\Q_p\la\!\la X,Y\ra\!\ra$ in $X=\log(x)$, $Y=\log(y)$ modulo
the ideal $I_Y$ generated by the words having 
$Y$ twice or more.

Now, take any point $z\in(\P^1\setminus\{0,1,\infty\})(\overline{\Q_p})$
and consider the set $\pi_1(\ovec{01},z)$ of the pro-$p$ etale paths 
from $\overrightarrow{01}$ to $z$. Set $F:=\Q_p(z)$.
Then $\pi_1(\ovec{01},z)$ forms a $\pi_1$-torsor equipped with
a canonical action of $G_F$.
In the well-known manner, the reduction map
$\pi_1\ontomap \pi_1^\pol$ gives rise to a $\pi_1^\pol$-torsor 
$\pi_1^\pol(\ovec{01},z)$ with $G_F$-action on it.

Choose any path $\gamma\in \pi_1^\pol(\ovec{01},z)$ 
and call it the {\it defining path}. 
Then, for each $\sigma\in G_F$, 
there is a unique element $\ff_\sigma\in \pi_1^\pol$ such that 
$\gamma$ is written as $\ff_\sigma^z\ast \sigma(\gamma)$.
The $p$-adic Galois polylogarithms (associated to $\gamma$) 
are defined as coefficients of the expansion of the 
Lie series $\log(\ff_\sigma^z)$ in $\Q_p[[X,Y]]/I_Y$ as: 
\begin{equation}
\label{eq2.4}
\log(\ff_\sigma^z)\equiv
-\kappa_z(\sigma)X-\sum_{k=1}^\infty
\ell i_k(z)(\sigma)\cdot ad(X)^{k-1}(Y).
\end{equation}
The above first coefficient $\kappa_z:G_F\to \Z_p$ 
is the Kummer 1-cocycle 
\begin{equation}
\label{kummer}
\zeta_{p^n}^{\kappa_z(\sigma)}
=(z^{1/p^n})^{\sigma-1}:=
\frac{\sigma(z^{1/p^n})}{z^{1/p^n}}
\end{equation}
over the system 
$\{z^{1/p^n}\}_{n=1}^\infty$ 
which is determined by the specialization
homomorphism
$\bigcup_{n=1}^\infty \overline{F}[t^{1/p^n}]\to \overline{F}$ 
along the path $\gamma$.
The second coefficient $\ell i_1(z):G_F\to\Z_p$ 
is the Kummer 1-cocycle $\kappa_{1-z}$ 
along the composition of the standard path 
$0\nyoroto 1$ with $\bar\gamma:1\nyoroto 1-z$, 
where $\bar\gamma$ is the obvious reflection
of $\gamma$.
The other coefficients 
$\ell i_k(z) (=\ell i_k(z,\gamma))$ $(k\ge 2)$
are in general only 1-cochains $G_F\to \Q_p$.

The following lemma is crucial to understand nature of the LHS of
our main statement of Introduction.

\begin{lemma}[\cite{W1} Theorem 5.3.1] \label{lem2.1}
The fixed field $H_m$ of the intersection of the kernels of 
$\chi_\cyc$, $\kappa_z$ and of $\ell i_1(z,\gamma),\dots, \ell i_{m-1}(z,\gamma)$
is independent of the choice of the defining path 
$\gamma:\ovec{01}\nyoroto z$. Moreover, 
for $\sigma\in G_{H_m}$,  the value $\ell i_m(z,\gamma)\in\Q_p$
is independent of the choice of $\gamma$. \qed
\end{lemma}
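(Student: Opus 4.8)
The plan is to compare, for two defining paths $\gamma$ and $\gamma'$, the non-abelian $1$-cocycles $\sigma\mapsto\ff_\sigma^z$ of $(\ref{eq2.4})$, and to show that the discrepancy between them disappears as soon as one restricts to $G_{F_\infty}=\Ker\bigl(\chi_\cyc\colon G_F\to\Z_p^\times\bigr)$. Since $\chi_\cyc$ is itself one of the characters occurring in the statement, the subgroup $N$ whose fixed field is $H_m$ is contained in $G_{F_\infty}$; hence it is enough to prove that $\kappa_z$ and every $\ell i_k(z,\gamma)$ $(k\ge 1)$, viewed as functions on $G_{F_\infty}$, do not depend on the choice of $\gamma$.

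First I would record how $\ff_\sigma^z$ varies with the path. Writing $\ff_\sigma^{z,\gamma}$ for the element attached to $\gamma$ by $(\ref{eq2.4})$, and using that $\pi_1^\pol(\ovec{01},z)$ is a torsor under $\pi_1^\pol$, there is a unique $c\in\pi_1^\pol$ with $\gamma'=\gamma\ast c$. Substituting this into $\gamma=\ff_\sigma^{z,\gamma}\ast\sigma(\gamma)$ and the analogous identity for $\gamma'$, a routine manipulation in the groupoid of pro-$p$ polylogarithmic paths produces a coboundary relation of the shape
\[
\ff_\sigma^{z,\gamma'}={}^{\gamma}\!\bigl(c\ast\sigma(c)^{-1}\bigr)\ast\ff_\sigma^{z,\gamma}
\qquad(\sigma\in G_F),
\]
where ${}^{\gamma}(-)=\gamma\ast(-)\ast\gamma^{-1}$ transports a loop based at $\ovec{01}$ to one based at $z$: changing $\gamma$ to $\gamma'$ twists the cocycle by the coboundary of $c$, carried over to the base point $z$.

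The decisive input is that $G_{F_\infty}$ acts trivially on $\pi_1^\pol$, and this is immediate from the structural description recalled above. By $(\ref{semi-direct})$ one has $\pi_1^\pol\cong\Z_p(1)\ltimes\bigl(\Z_p\(\Z_p(1)\)(1)\bigr)$ as a $G_F$-group; an element $\sigma$ with $\chi_\cyc(\sigma)=1$ acts trivially on $\Z_p(1)$, hence on the group algebra $\Z_p\(\Z_p(1)\)$ and on every Tate twist appearing here, and since the decomposition $(\ref{semi-direct})$ is $G_F$-equivariant such a $\sigma$ acts trivially on $\pi_1^\pol$ itself. Consequently $\sigma(c)=c$ for all $\sigma\in G_{F_\infty}$, the coboundary factor in the displayed relation becomes the trivial loop, and $\ff_\sigma^{z,\gamma'}=\ff_\sigma^{z,\gamma}$ there. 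Because $X$ together with the elements $ad(X)^{k-1}(Y)$ $(k\ge 1)$ form a topological basis of $\mathrm{Lie}(\Pet{}(\ovec{01}))$ modulo $I_Y$, reading off coefficients in $(\ref{eq2.4})$ shows that $\kappa_z$ and each $\ell i_k(z,\gamma)$ agree with their $\gamma'$-counterparts on $G_{F_\infty}$. Both assertions of the lemma follow. Indeed $N=\Ker\chi_\cyc\cap\Ker\kappa_z\cap\bigcap_{k=1}^{m-1}\Ker\ell i_k(z,\gamma)$ is contained in $G_{F_\infty}$ and is carved out there by conditions that are insensitive to $\gamma$, so its fixed field $H_m$ is independent of $\gamma$; and for $\sigma\in G_{H_m}\subseteq G_{F_\infty}$ the value $\ell i_m(z,\gamma)(\sigma)\in\Q_p$ is independent of $\gamma$ for the same reason. (Incidentally, the same discussion shows $N$ to be an honest subgroup, since on $G_{F_\infty}$ the cocycle $\sigma\mapsto\ff_\sigma^z$ is a homomorphism into $\pi_1^\pol$ and $N$ is the preimage of the normal subgroup of $\pi_1^\pol$ defined by the vanishing of $\proj'$ and of the first $m-1$ polylogarithmic coordinates.)

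The one spot demanding genuine care is the bookkeeping behind the displayed relation: one must fix once and for all the path-composition convention, the precise link between $\ff_\sigma$ and its transport $\ff_\sigma^z$ used in $(\ref{eq2.4})$, and the normalizations, so that the inverses land in the right positions. I do not expect this to be a real obstacle, because the only feature of the relation that is actually used afterwards---that a coboundary of $c$ is trivial wherever $c$ is Galois-fixed, together with the triviality of the $G_{F_\infty}$-action on $\pi_1^\pol$ furnished by $(\ref{semi-direct})$---is independent of all such conventions.
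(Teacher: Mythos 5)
There is a genuine gap, and it sits exactly at the point you dismiss as harmless bookkeeping. (Note first that the paper does not prove this lemma at all: it quotes it from \cite{W1}, Theorem 5.3.1, so your argument must stand on its own.) If $\gamma'=c\ast\gamma$ with $c\in\pi_1^\pol$, then comparing $\gamma=\ff_\sigma^{z,\gamma}\ast\sigma(\gamma)$ with the analogous identity for $\gamma'$ gives, after cancelling $\sigma(\gamma)$, a relation of the form $\ff_\sigma^{z,\gamma'}=c\ast\ff_\sigma^{z,\gamma}\ast\sigma(c)^{-1}$ (up to the usual left/right conventions): the two factors $c$ and $\sigma(c)^{-1}$ sit on \emph{opposite} sides of $\ff_\sigma^{z,\gamma}$. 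Your displayed relation, in which $c\ast\sigma(c)^{-1}$ appears as a single block multiplying the old cocycle, is only valid after abelianization. Consequently, even granting your (correct) observation that $G_{F_\infty}$ acts trivially on $\pi_1^\pol$ via (\ref{semi-direct}), what you get for $\sigma\in G_{F_\infty}$ is not $\ff_\sigma^{z,\gamma'}=\ff_\sigma^{z,\gamma}$ but $\ff_\sigma^{z,\gamma'}=c\,\ff_\sigma^{z,\gamma}\,c^{-1}$, a conjugation by the Galois-fixed but nontrivial element $c$. Since $\pi_1^\pol$ is non-abelian, this conjugation does change the coefficients in (\ref{eq2.4}): writing $\log c\equiv bX+(\text{terms in }Y)$ and applying $\mathrm{Ad}(\exp(bX))$ modulo $I_Y$, the coefficient of $ad(X)^{m-1}(Y)$ in $\log\ff_\sigma^{z,\gamma'}$ equals $\ell i_m(z,\gamma)(\sigma)$ plus a combination of $\kappa_z(\sigma),\ell i_1(z,\gamma)(\sigma),\dots,\ell i_{m-1}(z,\gamma)(\sigma)$ with coefficients built from $b$.

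This is why your intermediate claim, that every $\ell i_k(z,\gamma)$ is path-independent already on $G_{F_\infty}$, is actually false for $k\ge 2$, and why the lemma is stated with $H_m$ rather than $F_\infty$. One can see the failure concretely from (\ref{tilchiformula}): changing $\gamma$ replaces the branch $z^{1/p^n}$ by $\zeta_{p^n}^{b}z^{1/p^n}$, and the substitution $a\mapsto a+b$ in the product shows, e.g.\ for $m=2$, that $\tilchi_2^z$ changes by $-b\,\tilchi_1^z$, which does not vanish on $G_{F_\infty}$ (nor even on $G_{F(\mu_{p^\infty},z^{1/p^\infty})}$) in general. The correct argument is therefore inductive: the change-of-path correction to $\ell i_m$ is a $\Z_p$-combination of $\chi_\cyc-1$, $\kappa_z$ and $\ell i_1,\dots,\ell i_{m-1}$, so the successive kernels (hence $H_m$) are well defined independently of $\gamma$, and on $G_{H_m}$ the correction terms all vanish, giving path-independence of the value $\ell i_m(z,\gamma)(\sigma)$ there. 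Your structural input (triviality of the $G_{F_\infty}$-action on $\pi_1^\pol$ and the coboundary formalism) is a good starting point, but the conclusion you draw from it proves a stronger, false statement; the conjugation by $c$ must be kept and analyzed, not cancelled.
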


Explicit formulas for $\ell i_k(z)$ have been given in
\cite{N-W1} for all $\sigma\in G_F$. For simplicity, we present the
formula for $\sigma\in G_F$ with $\chi_\cyc(\sigma)=1$ and $\kappa_z(\sigma)=0$
in the following 

\begin{proposition}[\cite{N-W1} \S 3 Corollary]
\label{prop2.2}
For $m\ge 1$ and $\sigma\in G_{F(\zeta_{p^\infty},z^{1/p^\infty})}$, 
we have 
\begin{equation}
\ell i_m(z)(\sigma)=(-1)^{m-1}\frac{\tilchi_m^z(\sigma)}{(m-1)!}
\end{equation}
where $\tilchi_m^z(\sigma)\in\Z_p$ 
is defined by the Kummer properties
\begin{equation}
\label{tilchiformula}
\zeta_{p^n}^{\tilchi_m^z(\sigma)}=
\biggl(\prod_{a=0}^{p^n-1}(1-\zeta_{p^n}^az^{1/p^n})^{\frac{a^{m-1}}{p^n}}
\biggr)^{\sigma-1}
\qquad (n\ge 1). \qed
\end{equation}
\end{proposition}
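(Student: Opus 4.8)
The plan is to start from the defining identity \eqref{eq2.4} expressing the Galois polylogarithms as coefficients of the Lie series $\log(\ff_\sigma^z)$, and to compute this series explicitly when $\sigma$ acts trivially on $\mu_{p^\infty}$ and on $\{z^{1/p^n}\}_n$. First I would note that under the hypothesis $\sigma\in G_{F(\zeta_{p^\infty},z^{1/p^\infty})}$ we have $\chi_\cyc(\sigma)=1$ and $\kappa_z(\sigma)=0$, so $\sigma$ fixes both the standard inertia generator $x$ over $0$ and the point $z$; hence $\ff_\sigma^z$ lies in $\Ker\proj'$, the abelian part $\Z_p\(x^{\Z_p}\)\cdot y$ of $\pi_1^\pol$ appearing in \eqref{semi-direct}. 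Writing $\ff_\sigma$ as $g_\sigma(x)\cdot y$-type element, i.e.\ identifying $\Ker\proj'$ with the group ring $\Z_p\(\Z_p(1)\)$, the cocycle $\sigma\mapsto \ff_\sigma$ becomes an additive map to this group ring, and the task is to identify it.

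The key step is to recognize this group-ring-valued cocycle as the classical Kummer-theoretic object built from the products $\prod_{a}(1-\zeta_{p^n}^a z^{1/p^n})$. Concretely, $\pi_1^\pol(\ovec{01},z)$ modulo its abelian part records, via the path $\gamma$, the specialization of $\bigcup_n \bar F[t^{1/p^n}]$ at $t=z$, and the $y$-part of $\ff_\sigma$ encodes how $\sigma$ moves the $p$-power roots of $1-\zeta_{p^n}^a z^{1/p^n}$ for each $a$, weighted by the group-ring element $x^a$. Thus the coefficient of $x^a\cdot y$ in $\ff_\sigma$ is the Kummer cocycle along $\{(1-\zeta_{p^n}^a z^{1/p^n})^{1/p^n}\}_n$. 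Passing from the group element to its logarithm, $\log(y)=Y$ and $\log(x^a)=aX$, so $\log(x^a y x^{-a})\equiv \exp(a\,ad(X))(Y)=\sum_{k\ge1}\frac{a^{k-1}}{(k-1)!}ad(X)^{k-1}(Y)$ in $\Q_p[[X,Y]]/I_Y$. Summing over $a$ against the Kummer weights and comparing with the right-hand side of \eqref{eq2.4}, the coefficient of $ad(X)^{m-1}(Y)$ becomes $\frac{1}{(m-1)!}\sum_{a=0}^{p^n-1} a^{m-1}\cdot(\text{Kummer exponent for }1-\zeta_{p^n}^a z^{1/p^n})$, which is exactly $(-1)^{m-1}\tilchi_m^z(\sigma)/(m-1)!$ with $\tilchi_m^z$ as in \eqref{tilchiformula} (the sign $(-1)^{m-1}$ arising from the normalization of \eqref{eq2.4} carrying a global minus sign and from collecting $ad(X)^{k-1}$).

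The main obstacle I anticipate is bookkeeping the precise normalizations: matching the sign and factorial conventions between the Lie-coalgebra expansion \eqref{eq2.4}, the semidirect-product coordinates \eqref{semi-direct}, and the definition \eqref{tilchiformula} of $\tilchi_m^z$; and justifying rigorously that the $x^a y x^{-a}$-components of the torsor cocycle are the asserted Kummer cocycles (this is the content of the cited \cite{N-W1} \S 3 computation, which handles general $\sigma$ — here one restricts to the simpler case). Once the dictionary between the torsor of paths and the explicit Kummer products is set up, the $m=1$ case also falls out, recovering that $\ell i_1(z)(\sigma)=\kappa_{1-z}(\sigma)$ since the $a=0$ term gives $1-z^{1/p^n}$, matching the description of $\ell i_1$ in the text. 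For the proof I would simply cite \cite{N-W1} \S 3 Corollary for the general formula and specialize, rather than reproduce the full computation.
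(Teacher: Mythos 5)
Your proposal matches the paper's treatment: the paper gives no independent argument for Proposition \ref{prop2.2} but simply cites \cite{N-W1} \S 3 Corollary, and the mechanism you sketch (the $y$-component of $\ff_\sigma$ viewed as a $\Z_p\(\Z_p(1)\)$-valued Kummer cocycle whose $(m-1)$-st moments produce $\tilchi_m^z$) is exactly the Kummer--Heisenberg measure description the paper recalls in (\ref{KummerHeisenberg})--(\ref{tilchidef}). Be aware only that your heuristic expansion as written would yield a uniform sign $-1$ rather than $(-1)^{m-1}$, so the normalization bookkeeping you defer to \cite{N-W1} is precisely where the alternating sign arises.
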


Indeed, in \cite{N-W1}, where an embedding 
$\overline{F}\hookrightarrow\C$ is fixed
and $\gamma:\vec{01}\nyoroto z$ is taken 
to be a continous curve on $\mathbb{P}^1(\C)-\{0,1,\infty\}$,
we constructed 
a measure ${\hat \bkappa}_{z,\gamma}(\sigma)\in\hat \Z\(\hat \Z\)$ 
called the (adelic) {\it Kummer-Heisenberg measure},
after specifying standard branches of power roots of 
$(1-\zeta_{n}^az^{1/n})$ along $\gamma$.
(In fact, in loc.cit., we assumed $z\in F$ a number field, 
but the argument goes similarly for any subfield $F$ of $\C$.)
The $\Z_p$-valued measure 
$\bkappa_{z,\gamma}\in\Z_p\(\Z_p\)$
obtained as the image of ${\hat \kappa}_{z,\gamma}(\sigma)$
is given by
\begin{equation}
\label{KummerHeisenberg}
\zeta_{p^r}^{{\kappa}_{z,\gamma}(\sigma)(a+p^n\Z_p)}
=
\frac{\sigma\bigl(
(1-\zeta_{p^n}^{\chi_\cyc(\sigma)^{-1}a}z^{1/p^n})^{\frac{1}{p^r}}
\bigr) }
{(1-\zeta_{p^n}^{a}\sigma(z^{1/p^n}))^{\frac{1}{p^r}}}
\qquad (a\in\Z_p, \ r\ge 1)
\end{equation}
for each 
$\sigma\in G_F$.
The {\it $p$-adic polylogarithmic character}
$\tilchi_m^z:G_F\to \Z_p$ 
defined in \cite{N-W1} can be written as the moment integral
\begin{equation}
\label{tilchidef}
\tilchi_m^z(\sigma)=\int_{\Z_p} x^{m-1} d\bkappa_{z,\gamma}(\sigma)(x)
\quad (\sigma\in G_F).
\end{equation}
In this paper, we shall also consider a restricted version of
the above moment integral 
to $\Z_p^\times$,
and introduce the {\it restricted $p$-adic polylogarithmic character}
$\chi_m^z:G_{F}\to \Z_p$ by
\begin{equation}
\label{chidef}
\chi_m^z(\sigma)
=\int_{\Z_p^\times} x^{m-1} d\bkappa_{z,\gamma}(\sigma)(x)
\quad (\sigma\in G_{F}).
\end{equation}
Then, it is easy to see from Proposition \ref{prop2.2}
that the value $\chi_m^z(\sigma)\in\Z_p$ for 
$\sigma\in G_{F(\zeta_{p^\infty},z^{1/p^{\infty}})}$
is characterized by the Kummer properties
\begin{equation}
\label{chiformula}
\zeta_{p^n}^{\chi_m^z(\sigma)}=
\biggl(
\prod_{\substack{1\le a\le p^n \\ p\nmid a}}
(1-\zeta_{p^n}^az^{1/p^n})^{\frac{a^{m-1}}{p^n}}
\biggr)^{\sigma-1}
\qquad (n\ge 1).
\end{equation}
When $z=1$ and $F=\Q$, this is nothing but what is called 
the $m$-th Soule character.

\begin{remark} \label{Gabber}
Our considering the measure 
${\hat \bkappa}_{z,\gamma}(\sigma)$ should
be traced back partly to an old idea of O. Gabber
(as documented in \cite{N-W1} \S 3 Remark 1 and
\cite{N-W2} Acknowledgments). 
In \cite{W3}, it is generalized to a sequence of 
measures `$K_r(z)$' on $\Z_p^r$  
($r\geq 1$) that encodes all coefficients of $\log(\ff_\sigma)$ 
(which correspond to the multiple polylogarithms)
as integrals over $\Z_p^r$. 
\end{remark}

A simple connection between $\tilchi^z_m$ and $\chi^z_m$ can
be obtained by comparing 
(\ref{tilchiformula}) and (\ref{chiformula}).
The following generalizes \cite{N-W1} \S 2 Remark 2:

\begin{lemma}[\cite{W3} Proposition 5.1(v)]
\label{tilchi.and.chi}
For any continous path $\gamma:\ovec{01}\nyoroto z$
with respect to an embedding $\overline{F}\hookrightarrow \C$,
any $\sigma\in G_F$, 
and $m\ge 2$, we have 
\begin{equation*}\label{tilchi.chi}
\tilchi^z_m(\sigma)=\chi_m^z(\sigma)+p^{m-1}\chi_m^{z^{1/p}}(\sigma)+\cdots
=\sum_{k=0}^\infty p^{k(m-1)}\chi_m^{z^{1/p^k}}(\sigma).
\end{equation*}
\end{lemma}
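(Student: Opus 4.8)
The plan is to derive the stated identity by directly comparing the defining Kummer properties (\ref{tilchiformula}) and (\ref{chiformula}) for the characters in play, after rewriting the product defining $\tilchi_m^z(\sigma)$ as a product stratified by the $p$-adic valuation of the index. First I would work at a fixed level $n\ge 1$ and examine the finite product
\[
\prod_{a=0}^{p^n-1}(1-\zeta_{p^n}^a z^{1/p^n})^{\frac{a^{m-1}}{p^n}},
\]
noting that the exponent $a^{m-1}$ vanishes (for $m\ge 2$) when $a=0$, so the term $a=0$ contributes trivially and we may restrict to $1\le a\le p^n-1$. The idea is then to split this range according to $v_p(a)=k$ for $0\le k\le n-1$: writing $a=p^k b$ with $p\nmid b$ and $1\le b\le p^{n-k}-1$ (more precisely $1\le b\le p^{n-k}$ with $p\nmid b$, absorbing the boundary), the factor becomes $(1-\zeta_{p^{n-k}}^{b}(z^{1/p^k})^{1/p^{n-k}})^{p^{k(m-1)}b^{m-1}/p^n}$, using $\zeta_{p^n}^{p^k}=\zeta_{p^{n-k}}$ and $z^{1/p^n}=(z^{1/p^k})^{1/p^{n-k}}$.

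Next I would recognize each stratum as exactly (a power of) the product appearing in the defining property (\ref{chiformula}) of $\chi_m^{z^{1/p^k}}$ at level $n-k$. Concretely, the $k$-th stratum equals
\[
\Biggl(\prod_{\substack{1\le b\le p^{n-k}\\ p\nmid b}}(1-\zeta_{p^{n-k}}^{b}(z^{1/p^k})^{1/p^{n-k}})^{\frac{b^{m-1}}{p^{n-k}}}\Biggr)^{p^{k(m-1)}},
\]
so after raising both sides to $\sigma-1$ and passing to the exponent of $\zeta_{p^{n-k}}$, the $k$-th stratum contributes $p^{k(m-1)}\chi_m^{z^{1/p^k}}(\sigma)\bmod p^{n-k}$; reading everything back in terms of $\zeta_{p^n}$ (i.e.\ multiplying the level-$(n-k)$ exponent by $p^k$ to account for $\zeta_{p^{n-k}}=\zeta_{p^n}^{p^k}$) turns this into a congruence modulo $p^n$. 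Summing over $k=0,\dots,n-1$ and using (\ref{tilchiformula}) gives
\[
\tilchi_m^z(\sigma)\equiv \sum_{k=0}^{n-1}p^{k(m-1)}\chi_m^{z^{1/p^k}}(\sigma)\pmod{p^n}.
\]
Since this holds for every $n\ge 1$ and the right-hand series converges $p$-adically (the $k$-th term is divisible by $p^{k(m-1)}$ with $m-1\ge 1$), letting $n\to\infty$ yields the asserted equality.

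The main subtlety, and the step I would be most careful about, is bookkeeping the roots of unity and the compatible systems of power roots $\{z^{1/p^k}\}$ along the path $\gamma$: one must check that the branch of $(1-\zeta_{p^{n-k}}^{b}(z^{1/p^k})^{1/p^{n-k}})^{1/p^{\,\cdot}}$ induced by $\gamma$ via the specialization homomorphism agrees with the one used to define $\chi_m^{z^{1/p^k}}$, and that the index substitution $a=p^k b$ correctly matches the range $\{1\le b\le p^{n-k},\ p\nmid b\}$ in (\ref{chiformula}) (in particular handling the endpoint $b=p^{n-k}$, i.e.\ $a=p^n\equiv 0$, consistently). Once the branch compatibility and the indexing are pinned down, the argument is just the stratification-and-reindexing computation above together with a $p$-adic limit; the restriction $m\ge 2$ is exactly what makes the $a=0$ term harmless and the tail of the series convergent.
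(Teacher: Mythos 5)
Your stratification of the level-$n$ product by $v_p(a)=k$ is the correct computation, and the bookkeeping does close up: one should pull $p^{k(m-1)}$ out as an integer exponent, so the $k$-th stratum is $A_k^{p^{k(m-1)}}$ with $A_k=\prod_{p\nmid b}\bigl(1-\zeta_{p^{n-k}}^{b}z^{1/p^n}\bigr)^{b^{m-1}/p^n}$ (note the denominator stays $p^n$, not $p^{n-k}$ as in your display); the Kummer exponent of $A_k$ is congruent to $\chi_m^{z^{1/p^k}}(\sigma)$ modulo $p^{n-k}$, so the stratum contributes $p^{k(m-1)}\chi_m^{z^{1/p^k}}(\sigma)$ up to an error divisible by $p^{n+k(m-2)}$, which dies modulo $p^n$ precisely because $m\ge 2$. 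The genuine gap is elsewhere: your starting point is the Kummer characterizations (\ref{tilchiformula}) and (\ref{chiformula}), which the paper establishes only for $\sigma\in G_{F(\zeta_{p^\infty},z^{1/p^\infty})}$ (Proposition \ref{prop2.2} assumes $\chi_\cyc(\sigma)=1$ and $\kappa_z(\sigma)=0$). The lemma is asserted for any $\sigma\in G_F$, where $\tilchi_m^z(\sigma)$ and $\chi_m^z(\sigma)$ are defined by the moment integrals (\ref{tilchidef}), (\ref{chidef}) against the Kummer--Heisenberg measure; for such $\sigma$ the defining relation (\ref{KummerHeisenberg}) involves the twist $\zeta_{p^n}^{\chi_\cyc(\sigma)^{-1}a}$ and the term $\sigma(z^{1/p^n})$, so the product manipulation does not apply verbatim, and an identity of $1$-cochains on $G_F$ is not determined by its restriction to this subgroup.

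The paper closes exactly this gap by running your decomposition one level up, on the measure itself: from (\ref{KummerHeisenberg}) one checks, for all $\sigma\in G_F$, that $\bkappa_{z,\gamma}(\sigma)(p^k a+p^{n+k}\Z_p)=\bkappa_{z^{1/p^k},\gamma}(\sigma)(a+p^{n}\Z_p)$, and then splits $\int_{\Z_p}x^{m-1}d\bkappa_{z,\gamma}(\sigma)$ over $\Z_p=\bigsqcup_{k\ge 0}\bigl(p^k\Z_p\setminus p^{k+1}\Z_p\bigr)$, each piece becoming $\int_{\Z_p^\times}(p^k x)^{m-1}d\bkappa_{z^{1/p^k},\gamma}(\sigma)(x)$. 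This is your stratification-by-valuation phrased so that it holds on all of $G_F$, and it also absorbs the branch compatibility you rightly worry about, since the measure already encodes the branches determined by $\gamma$. To repair your argument, either restrict the claim to $\sigma\in G_{F(\zeta_{p^\infty},z^{1/p^\infty})}$ (which is in fact all that the application in Theorem \ref{fullformula} requires) or replace the product computation by the measure identity above.
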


\begin{proof}
Denote the path $\ovec{01}\nyoroto z^{1/p^k}$ induced from
$\gamma:\ovec{01}\nyoroto z$ by the same symbol $\gamma$. 
Then, it is easy to see from (\ref{KummerHeisenberg}) that
$\bkappa_{z,\gamma}(\sigma)(p^k a+p^{n+k}\Z_p)
=\bkappa_{z^{1/p^k},\gamma}(\sigma)(a+p^{n}\Z_p)
$ for all $k\ge 0$.
Then, decomposing the integral to infinite pieces as
\begin{align*}
\int_{\Z_p}x^{m-1} d\bkappa_{z,\gamma}(\sigma)(x)
&=\sum_{k=0}^\infty
\int_{p^k\Z_p-p^{k+1}\Z_p}x^{m-1} d\bkappa_{z,\gamma}(\sigma)(x) \\
&=\sum_{k=0}^\infty
\int_{\Z_p^\times} (p^kx)^{m-1} d\bkappa_{z^{1/p^k},\gamma}(\sigma)(x)
\end{align*}
immediately yields the desired formula.
\end{proof}

\begin{remark} \label{remPoly}
As noted above (after (\ref{kummer})), every etale path 
$\gamma:\ovec{01}\nyoroto z$
determines the value $z^{1/p^n}\in\overline{F}$,
hence the above Kummer quantity in 
 (\ref{KummerHeisenberg}) makes sense
for $\sigma\in G_{F(\zeta_{p^\infty},z^{1/p^\infty})}$
with no need to mention the choice of 
$\overline{F}\hookrightarrow \C$.
So, in what follows, we will consider both versions 
of $p$-adic polylogarithmic 
characters $\tilchi^z_m$, $\chi^z_m$
for arbitrary $\gamma\in\pi_1^\pol(\ovec{01},z)$ but 
only for $\sigma\in G_{F(\zeta_{p^\infty},z^{1/p^\infty})}$.
Formulas in Proposition \ref{prop2.2} and 
Lemma \ref{tilchi.and.chi} 
also hold for them.
\end{remark}

%


\subsection{Koblitz measure and $p$-adic polylogarithms}
Let $|\,|_p$ denote the standard norm on $\C_p$ with $|p|_p=p^{-1}$
and write $\cO:=\mathcal O_{\C_p}=\{z\in\C_p; |z|_p\le 1\}$. 
For $z\in \C_p$ with $|1-z|_p\geq 1$, 
Neal Koblitz introduced in \cite[p. 457]{Ko}, 
an $\cO$-valued measure 
$\mu_z$ on $\Z_p$ by 
\begin{eqnarray*}
\mu_z(a+p^n\Z_p)=\frac{z^a}{1-z^{p^n}}\ \text{for all }n\in\Z,\ 1\leq a\leq p^n.
\end{eqnarray*}
Note here that $|1-z^{p^n}|_p=|z^{p^n}|_p$ if $|z|_p>1$ and 
that $|1-z^{p^n}|_p\ge 1$ if $|z|_p\le 1$
for all $n\ge 0$ under our assumption on $z$.

\begin{lemma}
\label{eq2.10}
Let ${\mathscr F}_z(T)$ be the element of the Iwasawa algebra
$\cO\(T\)$ that corresponds to $\mu_z$, and let 
$\mu_z^{(p)}$ be the restriction of the measure
$\mu_z$ to $\Z_p^\times$.
Then, 
\begin{align*}
{\mathscr F}_z(T)&=\frac{1}{1-z(1+T)}\in \cO\(T\), 
\tag{1}\\
{\mathscr F}_z^{(p)}(T)&=\frac{1}{1-z(1+T)}-\frac{1}{1-z^p(1+T)^p}.
\tag{2}
\end{align*}
\end{lemma}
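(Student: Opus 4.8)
The plan is to recognize the Iwasawa algebra element $\mathscr{F}_z(T)$ directly from the defining values of Koblitz's measure $\mu_z$, and then to obtain $\mathscr{F}_z^{(p)}(T)$ by the standard restriction-to-units manipulation. Recall that the correspondence between a measure $\mu$ on $\Z_p$ and a power series $G(T)\in\cO\(T\)$ is normalized so that $G(T)=\int_{\Z_p}(1+T)^x\,d\mu(x)$, equivalently $\mu(a+p^n\Z_p)$ is read off from the partial sums of $G$ modulo $(1+T)^{p^n}-1$. So for part (1) I would simply compute $\int_{\Z_p}(1+T)^x\,d\mu_z(x)$ as a limit of Riemann sums: for fixed $n$, partition $\Z_p$ into the cosets $a+p^n\Z_p$ with $1\le a\le p^n$ and evaluate
\[
\sum_{a=1}^{p^n}(1+T)^a\,\mu_z(a+p^n\Z_p)
=\frac{1}{1-z^{p^n}}\sum_{a=1}^{p^n}\bigl(z(1+T)\bigr)^a
=\frac{z(1+T)}{1-z^{p^n}}\cdot\frac{1-\bigl(z(1+T)\bigr)^{p^n}}{1-z(1+T)}.
\]
Here one must check convergence in $\cO\(T\)$: under Koblitz's hypothesis $|1-z|_p\ge1$ the displayed remark guarantees $1/(1-z^{p^n})\in\cO$, and since $(z(1+T))^{p^n}\equiv z^{p^n}\pmod{p^n,\,T^{p^n}}$ one sees $z^{p^n}(1+T)^{p^n}-z^{p^n}\to 0$, so the numerator $z(1+T)(1-(z(1+T))^{p^n})$ converges to $z(1+T)(1-z^{p^n})$ up to the same error, and after cancelling $1-z^{p^n}$ the whole expression converges to $z(1+T)/(1-z(1+T))$. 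It remains only to note that $z(1+T)/(1-z(1+T))$ and $1/(1-z(1+T))$ define the same measure because they differ by $1$, which corresponds to adding a point mass adjustment that is absorbed — more precisely one should match the normalization convention of the paper (total mass, i.e. the constant term) and adjust by the appropriate unit; a cleaner route is to verify directly that $1/(1-z(1+T))$ reproduces the values $z^a/(1-z^{p^n})$ via its expansion $\sum_{j\ge0}z^j(1+T)^j$ grouped modulo $(1+T)^{p^n}=1$, which gives $\mu(a+p^n\Z_p)=\sum_{j\equiv a\,(p^n)}z^j=z^a/(1-z^{p^n})$ upon summing the geometric series (legitimate since $|z^{p^n}|_p<1$ or $|z^{p^n}|_p=|z^{p^n}-1|_p$ makes the tail summable). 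That identification is the cleanest form of part (1).

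For part (2), I would use the general fact that restricting a measure from $\Z_p$ to $\Z_p^\times$ corresponds, on the power-series side, to the operator that removes the "$p$-part": if $G(T)\leftrightarrow\mu$ then $\mu|_{p\Z_p}$ corresponds to $\sum_{\zeta^p=1}G\bigl(\zeta(1+T)-1\bigr)\cdot\frac1p$-type averaging, but in the present very explicit situation it is faster to observe that $\mu_z|_{p\Z_p}$ is itself essentially a Koblitz measure for $z^p$: from $\mu_z(pa+p^{n+1}\Z_p)=z^{pa}/(1-z^{p^{n+1}})=\mu_{z^p}(a+p^n\Z_p)$, the pushforward of $\mu_z|_{p\Z_p}$ under division by $p$ is $\mu_{z^p}$, and an unwinding of how multiplication-by-$p$ on $\Z_p$ acts on Iwasawa algebras (sending $1+T\mapsto(1+T)^p$) shows $\mu_z|_{p\Z_p}\leftrightarrow\mathscr F_{z^p}\bigl((1+T)^p-1\bigr)=1/(1-z^p(1+T)^p)$. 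Subtracting, $\mathscr F_z^{(p)}(T)=\mathscr F_z(T)-\mathscr F_{z}|_{p\Z_p}(T)=\dfrac{1}{1-z(1+T)}-\dfrac{1}{1-z^p(1+T)^p}$, which is the claim. One should double-check that this difference genuinely lies in $\cO\(T\)$ (not merely in the fraction field): the two denominators share the factor $1-z(1+T)$, and the quotient $\bigl(1-z^p(1+T)^p\bigr)/\bigl(1-z(1+T)\bigr)=\sum_{i=0}^{p-1}z^i(1+T)^i$ is a unit in $\cO\(T\)$ precisely when its constant-term-in-$T$ reduction is a unit, i.e. when $\sum_{i=0}^{p-1}z^i\not\equiv0$, which follows from $|1-z|_p\ge1$ (if $|z|_p\le1$ then $\sum z^i\equiv(z^p-1)/(z-1)$ and $z\not\equiv1$; if $|z|_p>1$ the leading term $z^{p-1}$ dominates), so the subtraction stays integral.

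\textbf{Main obstacle.} The only real subtlety is bookkeeping around the measure$\leftrightarrow$power-series normalization and the integrality claims: one must be careful that "the element of $\cO\(T\)$ that corresponds to $\mu_z$" is taken in the paper's convention, and that the geometric-series manipulations (which formally produce $1/(1-z(1+T))$) actually converge $p$-adically in $\cO\(T\)$ under Koblitz's hypothesis $|1-z|_p\ge1$ — this is exactly what the remark after the definition of $\mu_z$ is there to supply, so invoking it carefully is the crux. Everything else is a short computation; I expect the write-up to be no more than a third of a page.
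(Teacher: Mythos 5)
Your part (1) contains two steps that would not survive scrutiny as written. First, the claim that $z(1+T)/(1-z(1+T))$ and $1/(1-z(1+T))$ ``define the same measure because they differ by $1$'' is false: the constant $1$ is the Iwasawa-algebra element of the Dirac measure at $0$, not of the zero measure, and two distinct elements of $\cO\(T\)$ never give the same measure. What actually resolves your discrepancy is the choice of representative for the coset $p^n\Z_p$ ($a=0$ versus $a=p^n$): the paper reads the measure with representatives $0\le a\le p^n-1$ and then statement (1) is exactly the congruence $(1-z(1+T))\sum_{a=0}^{p^n-1}z^a(1+T)^a\equiv 1-z^{p^n}$ modulo $((1+T)^{p^n}-1)$. (The difference is supported at $\{0\}$, so it is invisible in (2) and in all the later $\Z_p^\times$-integrals, but for (1) itself you must fix the convention rather than absorb the constant.) Second, your ``cleaner route'' --- expanding $1/(1-z(1+T))=\sum_{j\ge0}z^j(1+T)^j$ and evaluating $\sum_{j\equiv a\,(p^n)}z^j$ as a geometric series --- does not converge when $|z|_p\ge 1$, which is precisely the case of interest (a root of unity of order prime to $p$ has $|z|_p=1$, and Koblitz's hypothesis also allows $|z|_p>1$); in a nonarchimedean field the tail is summable only when $|z|_p<1$. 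Likewise the estimate ``$(z(1+T))^{p^n}\equiv z^{p^n}\pmod{p^n,\,T^{p^n}}$'' is not correct, since for instance the coefficient of $T^p$ in $(1+T)^{p^n}-1$ has $p$-valuation only $n-1$. The repair is the paper's own argument, which needs no limits at all: for each $n$ one checks the exact polynomial congruence modulo $((1+T)^{p^n}-1)$, which is the defining property of the Iwasawa-algebra element attached to a measure; your Riemann-sum identity, read modulo that ideal instead of via a convergence argument, becomes exactly that computation.

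Your part (2) is correct and takes a mildly different route from the paper: you identify $\mu_z|_{p\Z_p}$ as the pushforward of $\mu_{z^p}$ under multiplication by $p$, so that its series is ${\mathscr F}_{z^p}((1+T)^p-1)=1/(1-z^p(1+T)^p)$, whereas the paper invokes the general restriction formula ${\mathscr F}_z^{(p)}(T)={\mathscr F}_z(T)-\frac1p\sum_{\xi\in\mu_p}{\mathscr F}_z(\xi(1+T)-1)$ from Coates--Sujatha together with the identity $\sum_{\xi\in\mu_p}\frac{1}{1-\xi Y}=\frac{p}{1-Y^p}$. The two amount to the same computation, and your version has the small advantage of not citing the averaging formula; your closing worry about integrality of the difference is unnecessary, since under $|1-z|_p\ge1$ each of the two terms already lies in $\cO\(T\)$ separately.
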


\begin{proof}
The first formula is a consequence of the congruence
$$
{\mathscr F}_z(T)=\frac{1}{1-z^{p^n}}
\left( \frac{1-z^{p^n}}{1-z(1+T)}
\right)
\equiv
\sum_{a=0}^{p^n-1}
\frac{z^a}{1-z^{p^n}}(1+T)^a
$$
modulo the ideal $((1+T)^{p^n}-1)$ in $\cO\(T\)$
for every $n$.
For the second one, recalling a formula \cite{C-S} \S 3.4 for 
the restriction of a measure on $\Z_p$ to $\Z_p^\times$,
we obtain
${\mathscr F}_z^{(p)}=
{\mathscr F}_z(T)-\frac{1}{p}\sum_{\xi\in\mu_p}{\mathscr F}_z(\xi(1+T)-1)$.
Apply then a general formula
$\sum_{\xi\in\mu_n}\frac{1}{1-\xi Y}=\frac{n}{1-Y^n}$.
\end{proof}

Note that there is an equality
\begin{equation}
\label{eq2.11}
{\mathscr F}_z^{(p)}(T)=
\sum_{n=0}^\infty
\left( \int_{\Z_p^\times}x^n d\mu_z(x)\right)
\frac{X^n}{n!}
\end{equation}
in $\C_p\(X\)$ with $1+T=e^X$.

In \cite{C5}, Coleman introduced the $p$-adic polylogarithm function
$\Li^\padic_k(z)$ ($k\ge 1$) and its companion function 
$\Li_k^{(p)}(z):=\Li^\padic_k(z)-p^{-k}\Li^\padic_k(z^p)$
(In his notation, $\Li^\padic_k(z):=\ell_k(z)$, 
$\Li_k^{(p)}(z):=\ell_k^{(p)}(z)$). 
The latter function is given by the Koblitz measure as follows:
\begin{eqnarray}
\label{ColemanIntegral}
\Li_k^{(p)}(z)=\int _{\Z_p^\times }x^{-k}d\mu_z(x)
=\int _{\Z_p^\times }x^{-k}d\mu^{(p)}_z(x)
\label{eq0.4}
\end{eqnarray}
(see \cite[Lemma 7.2]{C5}). 
In other words, if $|1-z|_p\geq 1$, we have the equality
\begin{eqnarray}
\Li_k^{(p)}(z)=
\lim_{n\rightarrow\infty}
\left(\sum_{a=1,\, p\nmid a}^{p^n}
\frac{z^a a^{-k}}{1-z^{p^n}}\right).
\label{eq0.6}
\end{eqnarray} 

\begin{remark}
It is worthwhile to recall the following:
Coleman \cite{C3} showed that $\Li^\padic_k(z)$ 
has ``analytic continuation along Frobenius'' to
$\mathbb{P}^1(\C_p)-\{1,\infty\}$
depending on branch parameter of Iwasawa logarithm
(cf.\,e.g.,\,\cite[p.425]{KN}). 
This result has been extended to the $p$-adic multiple polylogarithms
by H. Furusho \cite{Fu}.
See also Remark \ref{Gabber} for comparable progress in the theory
of Galois polylogarithms. 
\end{remark}

\medskip\noindent
{\bf Bernoulli distribution and Kubota-Leopoldt $L$-function}

Before closing this subsection, we quickly review classically 
known facts about the Bernoulli distribution for the case $z=1$. 
Let $c$ be an integer $\ne 1$ prime to $p$.
Then a measure $E_c$ on $\Z_p$ is defined as follows 
(cf. [Wa] \S 12.2):
For each $n\ge 1$, choose any integer $\bar c=\bar c_n$ 
with $c\bar c\equiv 1$ mod $p^n$, and define
\begin{equation}
E_c(a+p^n\Z_p):=B_1\left(\left\{
\frac{a}{p^n}\right\}\right)-
cB_1\left(\left\{\frac{\bar ca}{p^n}\right\}\right),
\end{equation}
where $\{\ast\}$ means the fractional part.
This is independent of the choice of $\bar c$, and forms
a $\Z_p$-valued measure $E_c$ on $\Z_p$.
A well-known calculation ($e^X=1+T$):
\begin{equation}
\label{eq2.15}
\frac{1}{T}-\frac{c}{(1+T)^c-1}
=\sum_{n=1}^\infty (1-c^n)\frac{B_n}{n}\cdot\frac{X^{n-1}}{(n-1)!}
\end{equation}
as well as the formula 
$L_p(1-n,\omega^n)=-(1-p^{n-1})\frac{B_n}{n}$
give
\begin{equation}
\label{eq2.16}
\int_{\Z_p^\times}x^{m-1}dE_c(x)
=(1-c^m)(1-p^{m-1})\frac{B_m}{m}
=(c^m-1)L_p(1-m,\omega^m).
\end{equation}
(Cf. proofs of [Wa] Cor.\,5.13, Cor.\,12.3).
A formula of the 
$p$-adic Mellin transformation reads then as follows:
\begin{equation}\label{eq2.17}
\int_{\Z_p^\times}
x^{-m}dE_c(x)
=
(c^{1-m}-1)L_p(m,\omega^{1-m}).
\end{equation}
(Cf. [Wa] Theorem 12.2 with $d=1$, $\chi=\omega^{1-m}$, $s=-m$.)
Finally, we remark that the power series 
${\mathscr F}_z^{(p)}$ of Lemma \ref{eq2.10} (2) 
for $z=1$ has a pole at $T=0$. However, its usual 
`$c$-correction' cancel the pole and has an expansion 
in the form:
\begin{equation}
\label{eq2.18}
{\mathscr F}_1^{(p)}(T)-c {\mathscr F}_1^{(p)}((1+T)^c-1)=
\sum_{n=0}^\infty
\left( \int_{\Z_p^\times}x^n dE_c(x)\right)
\frac{X^n}{n!}
\end{equation}
in $\Q_p\(X\)$ with $1+T=e^X$.
(Note LHS=$Ug(T)$ of [Wa] \S 12 p.251-252.)

\section{Special family of Coleman series}

\subsection{Basic setup and $f_{z,c}(T)$}
In this subsection, after reviewing some basic notions,  
we shall introduce a special class of Coleman power series
that play important roles in our proof of Theorem \ref{mainthm}.

Let $F$ be a finite unramified extension of $\Q_p$.
In this section, we study Galois and $p$-adic polylogarithms
$\ell i_m(z)$ and $\Li^\padic_m(z)$ for 
$z\in F\cap \muprime$, a root of unity of order prime to $p$.
We will introduce and observe behaviors of 
certain special power series 
$f_{z,c}(T)\in \cO_F\(T\)$ closely related to 
${\mathscr F}_z^{(p)}(T)$ of the previous section. 

We begin by setting up basic operations on power series. 
Set $F_\infty:=F(\mu_{p^\infty})$ and denote by $G_\infty$
the Galois group of $F_\infty/F$. 
Note that, the $p$-adic cyclotomic character induces the canonical 
isomorphism 
$$
\bar\chi_\cyc:G_\infty\xrightarrow{\sim}\Z_p^\times
$$ 
as $F$ is unramified over $\Q_p$. 
For each $a\in\Z_p^\times$, we define $\sigma_a\in G_\infty$ to be 
$\bar\chi_\cyc^{-1}(a)$. 

We set 
$\cO_F\(T\)^{\times 1}:=
\mathrm{Ker}(\cO_F\(T\)^{\times}
\xrightarrow{\mathrm{aug}}\mathcal O_F^\times 
\rightarrow k_F^\times)$. 
Here, the first map is the augmentation map and $k_F$ 
the residue field of $F$. 
Define the action of $G_\infty$ on $\cO_F\(T\)$ by
\begin{eqnarray*}
\sigma_a f(T)=f((1+T)^a-1).
\end{eqnarray*}
that restricts to the action on $\mathcal O_F\(T\)^{\times 1}$. 
Since $\mathcal O_F\(T\)$ and $\cO_F\(T\)^{\times 1}$ 
are compact (additive and multiplicative) $\Z_p$-modules , 
the complete group ring $\Z_p\(G_\infty\)$ acts on 
both topological abelian groups. 
We regard $\mathcal O_F\(T\)$ (resp. $\mathcal O_F\(T\)^{\times 1}$) 
as a left (resp. right) $\Z_p\(G_\infty\)$-module. 

\begin{remark}We remark the followings:
\begin{itemize}

\item[(1)] The $\Z_p$-module structure on 
$\mathcal O_F\(T\)^{\times 1}$ 
is given by
$f(T)^c=\sum_{n=0}^\infty \binom{c}{n}\left(f(T)-1\right)^n$
for $c\in \Z_p$, $f\in \mathcal O_F\(T\)^{\times 1}$,
where $\binom{x}{n}:=\frac{x(x-1)\cdots (x-n+1)}{n!}\in \Z_p$.

\item[(2)]Since $\mathcal O_F\(T\)$ has a canonical structure 
of (additive) $\mathcal O_F$-module, the complete group ring 
$\mathcal O_F\(G_\infty\)$ acts on it. 
However, $\mathcal O_F\(T\)^{\times 1}$ does not have 
a canonical $\mathcal O_F$-module structure, i.e., there does not 
exist a canonical action of $\mathcal O_F\(G_\infty\)$ on it.

\item[(3)]
A finite element $\sum_{a\in\Z_p^\times} c_a \sigma_a$ 
($c_a=0$ for all but finitely many $a\in G_\infty$) acts
on $f(T)$ in the following forms:
$
\left(\sum_{a} c_a \sigma_a\right)\cdot f(T)
=\sum_{a} c_a f((1+T)^a-1),
$ 
and
$
f(T)^{\Sigma_{a} c_a \sigma_a}
=\prod_a f((1+T)^a-1)^{c_a}.
$
\end{itemize}
\end{remark}

\noindent
{\bf Special Coleman series.}
For  $c\in \Z$ ($c\ne 1$, $p\nmid c$) and for $z\in F\cap \muprime$, 
let us
introduce power series $f_{z,c}\in\cO_F\(T\)$ by
\begin{equation}
f_{z,c}(T):=
\begin{cases}
\displaystyle
\quad c\cdot \frac{(1+T)^{-\frac{1}{2}}-(1+T)^{\frac{1}{2}}}
{(1+T)^{-\frac{c}{2}}-(1+T)^{\frac{c}{2}}}, &(z=1); \\[4mm]
\displaystyle
\quad 
\frac{(1+T)^{-\frac{1}{2}}-z(1+T)^{\frac{1}{2}}}
{(1+T)^{-\frac{c}{2}}-z(1+T)^{\frac{c}{2}}}, 
&(z\in\muprime\setminus \{1\}).
\end{cases}
\end{equation}

We denote by $\sigma_F:F\rightarrow F$ the Frobenius automorphism of $F$. 
It acts on each element of $F\(T\)$ by the action on coefficients. 
Define the {\it integral logarithm}
$\cLog: \mathcal O_F\(T\)^\times
\rightarrow\mathcal O_F\(T\)$ 
by the formula
\begin{eqnarray*}
\cLog(f(T)):=\frac{1}{p}\log\left(\frac{f(T)^p}{(\sigma_F f)((1+T)^p-1)}\right).
\end{eqnarray*}
Further, we define the differential operator 
$D$ on $F\(T\)$ to be $(1+T)\frac{d}{dT}$ and
the ring homomorphism 
$[p]:\mathcal O_F\(T\)\rightarrow \mathcal O_F\(T\)$ 
by $([p]f)(T)=f((1+T)^{p}-1)$. 

Let $\Norm:\mathcal O_F(\!(T)\!)^\times\rightarrow 
\mathcal O_F(\!(T)\!)^\times$ 
(resp.   $\bTrace:\mathcal O_F\(T\)\rightarrow \mathcal O_F\(T\)$)
be the norm operator (resp. the `reduced' trace operator) 
introduced by Coleman (cf.\,\cite{C1}, \cite[p.386]{C2}). 
They are characterized by
\begin{equation*}
([p]\Norm f)(T)=\prod_{\xi\in\mu_p} f(\xi (1+T)-1), \quad
([p]\bTrace f)(T)=\frac{1}{p}\sum_{\xi\in\mu_p} f(\xi (1+T)-1).
\end{equation*}

\begin{lemma} \label{lem3.1}
Let $F$ be a finite unramified extension of $\Q_p$
containing $z\in\muprime$. Then,
\begin{align*}
&f_{z,c}(T)\in 1+T \mathcal O_F\(T\). \tag{1}
\\
&\Norm(f_{z,c}(T))=(\sigma_Ff_{z,c})(T). 
\tag{2}
\\
&D{\cLog}f_{z,c}(T)
=
-\left({\mathscr F}_z^{(p)}(T)
-c {\mathscr F}_z^{(p)}((1+T)^c-1)
\right)
\tag{3}
\end{align*}
\end{lemma}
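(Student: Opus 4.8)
\textbf{Proof plan for Lemma \ref{lem3.1}.}

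The plan is to treat the three assertions in order, since (1) feeds into the well-definedness of $\cLog$ in (3), and (2) is the characterizing property of Coleman's norm operator that makes $f_{z,c}$ a genuine Coleman power series. For (1), I would simply evaluate $f_{z,c}(T)$ at $T=0$: in both cases of the definition the numerator and denominator vanish to first order in $T$ (writing $(1+T)^{\pm 1/2}=e^{\pm X/2}$ with $1+T=e^X$, the numerator is $e^{-X/2}-ze^{X/2}$ and at $X=0$ this is $1-z$), so one must check the quotient lies in $1+T\cO_F\(T\)$. For $z\ne 1$, the denominator $e^{-cX/2}-ze^{cX/2}$ has constant term $1-z\in\cO_F^\times$ (since $z$ is a root of unity of order prime to $p$, $1-z$ is a unit in $\cO_F$), so the quotient is a well-defined element of $\cO_F\(T\)$ with constant term $\frac{1-z}{1-z}=1$, hence lies in $1+T\cO_F\(T\)$; for $z=1$ one instead factors out $X$ from numerator and denominator (both are odd in $X$), getting $c\cdot\frac{\sinh(X/2)}{\sinh(cX/2)}$ up to normalization, whose value at $X=0$ is $c\cdot\frac{1/2}{c/2}=1$, and the power series has $\cO_F$-integral (indeed $\Z_p$-integral) coefficients because $p\nmid c$.

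For (2), I would use the characterization $([p]\Norm f)(T)=\prod_{\xi\in\mu_p}f(\xi(1+T)-1)$ together with the identity that for a variable $S$ and a $p$-th root of unity $\xi$, replacing $1+T$ by $\xi(1+T)$ inside the exponential presentation corresponds to $X\mapsto X+2\pi i k/p$ type shifts; concretely the key elementary identity is $\prod_{\xi\in\mu_p}\bigl((\xi(1+T))^{-1/2}-z(\xi(1+T))^{1/2}\bigr)$, after suitably handling the branch of the square root, equals (up to a harmless sign/unit) $(1+T)^{-p/2}-z^p(1+T)^{p/2}$, i.e.\ the same type of expression with $1+T$ replaced by $(1+T)^p$ and $z$ by $z^p$. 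Thus $\prod_{\xi\in\mu_p}f_{z,c}(\xi(1+T)-1)$ becomes $\bigl([p]f_{z^p,c}\bigr)(T)$ — but since $z\in\muprime$ has order prime to $p$, raising to the $p$-th power is exactly the Frobenius action, $z^p=\sigma_F(z)$, so the right-hand side is $[p](\sigma_F f_{z,c})(T)$, and cancelling $[p]$ (which is injective) gives $\Norm f_{z,c}=\sigma_F f_{z,c}$. The case $z=1$ is the same computation with $z=z^p=1$ throughout. I expect the branch-of-square-root bookkeeping in this product identity to be the one genuinely delicate point, and the cleanest route is probably to verify the identity of the \emph{squares} $\prod_\xi\bigl((\xi(1+T))^{-1/2}-z(\xi(1+T))^{1/2}\bigr)^2=\prod_\xi\bigl((\xi(1+T))^{-1}-2z+z^2\xi(1+T)\bigr)$ using $\sum_{\xi\in\mu_p}\xi=0$ and $\prod_{\xi\in\mu_p}\xi(1+T)=\pm(1+T)^p$, then fix the sign by looking at constant terms.

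For (3), I would unwind the definition of $\cLog$: $\cLog f_{z,c}(T)=\frac1p\log\bigl(f_{z,c}(T)^p/(\sigma_F f_{z,c})((1+T)^p-1)\bigr)$, and now use part (2), which says $(\sigma_F f_{z,c})(T)=\Norm f_{z,c}(T)$, combined with the defining relation $[p]\Norm f_{z,c}=\prod_{\xi\in\mu_p}f_{z,c}(\xi(1+T)-1)$; this rewrites the argument of $\log$ as $f_{z,c}(T)^p\big/\prod_{\xi\in\mu_p}f_{z,c}(\xi(1+T)-1)$. Applying $D=(1+T)\frac{d}{dT}$ and using that $D$ commutes with $\log$ on $1$-units and that $D$ of $\log g(\xi(1+T)-1)$ contributes $\xi\cdot(1+T)\cdot(g'/g)(\xi(1+T)-1)$, the whole $D\cLog f_{z,c}$ telescopes into $D\log f_{z,c}(T)-\frac1p\sum_{\xi\in\mu_p}(D\log f_{z,c})(\xi(1+T)-1)$, i.e.\ $(1-\frac1p\sum_{\xi\in\mu_p}[\xi])$ applied to $D\log f_{z,c}(T)$. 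So it remains to compute $D\log f_{z,c}(T)$ directly from the explicit formula: writing $1+T=e^X$, $D=\frac{d}{dX}$, and $\log f_{z,c}=\log(e^{-X/2}-ze^{X/2})-\log(e^{-cX/2}-ze^{cX/2})$ (plus the constant $\log c$ when $z=1$), differentiation gives $-\frac12\cdot\frac{e^{-X/2}+ze^{X/2}}{e^{-X/2}-ze^{X/2}}+\frac{c}{2}\cdot\frac{e^{-cX/2}+ze^{cX/2}}{e^{-cX/2}-ze^{cX/2}}$, which one recognizes, after multiplying numerator and denominator of the first term by $e^{X/2}$ (getting $-\frac12\cdot\frac{1+ze^X}{1-ze^X}=\frac{1}{1-ze^X}-\frac12=\mathscr F_z(e^X-1)-\frac12$ via Lemma \ref{eq2.10}(1)), as $-\mathscr F_z(T)+c\,\mathscr F_z((1+T)^c-1)$ up to the constants $\pm\frac12$, which cancel. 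Finally, applying the projector $1-\frac1p\sum_{\xi\in\mu_p}[\xi]$ turns $\mathscr F_z$ into $\mathscr F_z^{(p)}$ by the very computation in the proof of Lemma \ref{eq2.10}(2), yielding $D\cLog f_{z,c}(T)=-\bigl(\mathscr F_z^{(p)}(T)-c\,\mathscr F_z^{(p)}((1+T)^c-1)\bigr)$ as claimed; the $z=1$ case matches \eqref{eq2.18}. The main obstacle throughout is the square-root branch issue in (1) and (2); once that is pinned down, (3) is a mechanical logarithmic-derivative computation glued to Lemma \ref{eq2.10}.
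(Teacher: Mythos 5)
Your plan follows the paper's own proof in all essentials: (1) is the constant-term check $f_{z,c}(0)=1$ (the paper just invokes d'H\^opital at $z=1$); (2) is the same norm computation — the paper sets $k_{z,a}(T)=(1+T)^{-a/2}-z(1+T)^{a/2}$, shows $\prod_{\xi\in\mu_p}k_{z,a}(\xi(1+T)-1)=([p]k_{z^p,a})(T)$, and concludes by $z^p=\sigma_F(z)$, multiplicativity of $\Norm$ and injectivity of $[p]$; and (3) is the same logarithmic-derivative calculation fed into Lemma \ref{eq2.10}(2) resp.\ (\ref{eq2.18}), except that the paper computes factorwise directly from the definition of $\cLog$, getting $D\cLog k_{z,a}=a\bigl(-\tfrac{1}{1-z(1+T)^a}+\tfrac{1}{1-\sigma_F(z)(1+T)^{ap}}\bigr)$, whereas you first use (2) to rewrite $D\cLog f_{z,c}$ as $\bigl(1-\tfrac1p\sum_{\xi\in\mu_p}\bigr)$ applied to $Df_{z,c}/f_{z,c}$. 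Also note that the square-root ``branch'' issue you worry about in (1)--(2) does not exist $p$-adically: since $p$ is odd, $\tfrac12\in\Z_p$, so $(1+T)^{\pm a/2}$ is the canonical binomial series, $(\xi(1+T))^{a/2}=\xi^{a/2}(1+T)^{a/2}$ with $\xi^{a/2}\in\mu_p$, and $\prod_{\xi\in\mu_p}\xi^{-a/2}=1$; no sign ambiguity arises and your squaring fallback is unnecessary.

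The one point you must repair is the constant bookkeeping in (3). Correctly, $-\tfrac12\cdot\tfrac{1+ze^X}{1-ze^X}=-\tfrac{1}{1-ze^X}+\tfrac12$ (your display has the opposite sign), and the two constants produced by the two terms are $+\tfrac12$ and $-\tfrac c2$; they do \emph{not} cancel each other (their sum is $\tfrac{1-c}{2}$). They disappear only because the operator $1-\tfrac1p\sum_{\xi\in\mu_p}$ (equivalently $1-[p]\sigma_F$) annihilates constants, constants being invariant under $T\mapsto\xi(1+T)-1$; you also need the small remark that this operator commutes with $T\mapsto(1+T)^c-1$ (because $\xi\mapsto\xi^c$ permutes $\mu_p$), so that it converts $c\,{\mathscr F}_z((1+T)^c-1)$ into $c\,{\mathscr F}^{(p)}_z((1+T)^c-1)$. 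With these corrections your argument gives exactly Lemma \ref{lem3.1}(3); taken literally, the sign slip together with ``the constants cancel'' would leave either a wrong overall sign or a spurious constant $\tfrac{1-c}{2}$ in the formula.
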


\begin{proof}
(1) This is just claiming $f_{z,c}(0)=1$. 
Use d'H{\^o}spital rule when $z=1$.
(2) 
For $a\in \Z_p^\times$, set 
$k_{z,a}(T):=(1+T)^{-\frac{a}{2}}-z(1+T)^{\frac{a}{2}}$. 
Then, by the definition of the norm operator, we have:
\begin{eqnarray*}
([p]\mathcal Nk_{z,a})(T)
&=&\prod_{\xi\in \mu_p}
\left(
(\xi(1+T))^{-\frac{a}{2}}-z(\xi(1+T))^{\frac{a}{2}}\right)\\
&=&\prod_{\xi\in \mu_p}\left((1+T)^{-\frac{a}{2}}
-z\xi^a(1+T)^{\frac{a}{2}}\right) \\
&=& (1+T)^{-\frac{pa}{2}}-z^p(1+T)^{\frac{pa}{2}}
=([p]k_{z^p})(T). 
\end{eqnarray*}
As $[p]$ is injective, 
it follows that $\mathcal Nk_{z,a}=k_{z^p,a}$. 
Since $\sigma_F$ acts on $\muprime$ by $p$-power 
and the operator $\mathcal N$ is multiplicative, 
the assertion (2) follows.
(3) also follows from a simple calculation
\begin{equation*}
D\cL(k_{z,a}(T))=
a\left(
-\frac{1}{1-z(1+T)^a}+\frac{1}{1-\sigma_F(z)(1+T)^{ap}}
\right)
\end{equation*}
with Lemma \ref{eq2.10} (2), including the case $z=1$ where
${\mathscr F}_1^{(p)}(T)-c{\mathscr F}_1^{(p)}((1+T)^c-1)\in\Z_p\(T\)$
is discussed in (\ref{eq2.18}).
(See also Remark \ref{rem1.3} below.)
\end{proof}

\begin{remark} 
\begin{itemize}

\item[(1)]
The integral logarithm $\cLog$ on $\cO_F\(T\)^{\times 1}$
preserves the action of $\Z_p\(G_\infty\)$-action, namely, it holds
that 
$\cLog(f^\lambda)=\lambda\cdot\cLog(f)$ 
for any $\lambda\in\Z_p\(G_\infty\)$ and 
$f\in \mathcal O_F\(T\)^{\times 1}$.
By a simple argument, one can easily see that 
$\Norm(f)=\sigma_F(f)$ implies $\bTrace(\cLog f)=0$. Namely,
$\cLog$ maps $(\cO_F\(T\)^\times)^{\Norm=\sigma_F}$ into 
$\cO_F\(T\)^{\bTrace=0}$. 


\item[(2)]
If $f\in\cO_F\(T\)^\times$ is not necessarily 
in $\cO_F\(T\)^{\times 1}$, then
$\log(f)$ has no obvious sense and the convergence 
(and stay) of $\cLog(f)$ in $\mathcal O_F\(T\)$ 
involves technical estimate of coefficients 
(see \cite{C-S} Lemma 2.5.1).
We have $\cLog(fg)=\cLog(f)+\cLog(g)$ as long as both
$f,g\in \cO_F\(T\)^{\times}$. 
But, in practical computation, it could happen 
that we know the existence of $\cLog(fg)$ but 
not of individual $\cLog(f)$ or $\cLog(g)$.
A possible way to remedy such a computational difficulty
(which occurs also in the proof of
the above Lemma \ref{lem3.1} (3) when $z=1$) 
is to consider 
the logarithmic derivative $D\cLog(f)=(1-[p]\sigma_F)\cdot(Df/f)$ 
(whose existence is often easier to see) 
and to use $D\cLog(fg)=D\cLog(f)+D\cLog(g)$.

\item[(3)]
As remarked in \cite{C4}, Corollary of Theorem 3, 
the differential operator $D$ gives an isomorphism 
$D: \cO_F\(T\)^{\bTrace=0}\isom \cO_F\(T\)^{\bTrace=0}$.
(The proof given in loc.\,cit.\,for $\Z_p \(T\)$ works also for
$\cO_F\(T\)$ with no obstructions.)
This defines the inverse map
$$
D^{-1}: \cO_F\(T\)^{\bTrace=0}\isom \cO_F\(T\)^{\bTrace=0}.
$$
\end{itemize}
\label{rem1.3}
\end{remark}

\begin{lemma}
\label{lem3.4}
For a power series $f(T)\in\cO_F\(T\)$, denote 
by $\mu_f$ the corresponding 
$\cO_F$-valued measure on $\Z_p$.
Suppose $f(T)\in\cO_F\(T\)^{\bTrace=0}$.
Then, for $k\in\Z$, it holds that
$$
D^{-k}(f)(T)=\sum_{n=0}^\infty
\left(
\int_{\Z_p^\times}x^{n-k}d\mu_f(x)
\right) \frac{X^n}{n!}
$$
in $F\(X\)$ with $e^X=1+T$.
In particular, we have
$$
D^{-k}(f)(0)=
\int_{\Z_p^\times}x^{-k}d\mu_f(x).
$$
\end{lemma}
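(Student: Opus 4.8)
The plan is to reduce the statement to the one Iwasawa-theoretic dictionary between power series and measures, together with the behaviour of the operator $D$ and its inverse. First I would recall that, by definition of the Iwasawa isomorphism, the coefficients of $f(T)$ expanded in the variable $X$ with $1+T = e^X$ are precisely the moments of $\mu_f$: writing $f(T) = \sum_{n\ge 0}\bigl(\int_{\Z_p} x^n\,d\mu_f(x)\bigr)\frac{X^n}{n!}$. The effect of the differential operator $D = (1+T)\frac{d}{dT}$ in the $X$-coordinate is simply $D = \frac{d}{dX}$, so $D$ multiplies the $n$-th moment by $n$; more precisely $D$ sends the measure $\mu_f$ to the measure $x\,\mu_f$ (multiplication of the measure by the coordinate function $x$). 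Iterating, $D^k$ corresponds to multiplication by $x^k$ on measures, i.e. $D^k(f)$ has moments $\int x^{n+k}\,d\mu_f$.

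Next I would invoke the hypothesis $f\in\cO_F\(T\)^{\bTrace=0}$. By Remark \ref{rem1.3}(3), $D$ is an automorphism of $\cO_F\(T\)^{\bTrace=0}$, so $D^{-1}f$ makes sense inside that module; concretely, the condition $\bTrace f = 0$ means the measure $\mu_f$ is supported on $\Z_p^\times$ (its restriction-to-$\Z_p^\times$ equals itself), which is exactly what makes division by $x$ legitimate. So on the level of measures, $D^{-1}$ corresponds to multiplication by $x^{-1}$ (well-defined on measures supported on $\Z_p^\times$), and hence $D^{-k}(f)$ has moments $\int_{\Z_p^\times} x^{n-k}\,d\mu_f(x)$. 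Translating back through the Iwasawa dictionary gives
$$
D^{-k}(f)(T) = \sum_{n=0}^\infty\left(\int_{\Z_p^\times}x^{n-k}\,d\mu_f(x)\right)\frac{X^n}{n!},
$$
and evaluating at $T=0$, i.e. $X=0$, kills all terms but $n=0$, yielding $D^{-k}(f)(0) = \int_{\Z_p^\times}x^{-k}\,d\mu_f(x)$. For $k\le 0$ the same argument works with $D^{-k}=D^{|k|}$ directly, so the formula holds for all $k\in\Z$.

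The step I expect to require the most care is making precise the claim ``$D$ acts as multiplication by $x$ on the associated measure, and $D^{-1}$ on the $\bTrace=0$ part acts as division by $x$.'' One must check that restricting $\mu_f$ to $\Z_p^\times$ really does correspond to the $\bTrace=0$ condition (this is the Coleman reduced-trace computation, $[p]\bTrace f = \tfrac1p\sum_{\xi\in\mu_p}f(\xi(1+T)-1)$, which picks out the part of the measure supported on $p\Z_p$), so that on $\cO_F\(T\)^{\bTrace=0}$ the function $x$ is invertible as a multiplier on measures and $D^{-1}$ genuinely realizes $x^{-1}\cdot$. Once that identification is in place, everything else is the bookkeeping of moments under $e^X = 1+T$ and the specialization $X=0$, which is routine. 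I would also remark that convergence is not an issue here since we work entirely inside $\cO_F\(T\)$ and $F\(X\)$, the expansions being formal in $X$.
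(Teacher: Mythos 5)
Your proposal is correct and follows essentially the same route as the paper: both arguments use that $\bTrace(f)=0$ forces $\mu_f$ to be supported on $\Z_p^\times$, so that $x^{-k}\mu_f$ is a bounded measure whose associated power series $g$ lies in $\cO_F\(T\)^{\bTrace=0}$ and has the stated moment expansion, and then identify $g=D^{-k}(f)$ via $D=\frac{d}{dX}$ together with the invertibility (uniqueness) of $D$ on $\cO_F\(T\)^{\bTrace=0}$ from Remark \ref{rem1.3}(3). The only cosmetic difference is that the paper phrases the construction of $g$ through the bounded linear functional $h\mapsto\int_{\Z_p}h\,x^{-k}\,d\mu_f(x)$ rather than directly as the measure $x^{-k}\mu_f$, which is the same thing.
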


\begin{proof}
The case of $k\le 0$ is well known (e.g.,
\cite{C-S} Lemma 3.3.5). So, assume $k>0$.
Consider a linear functional $L$ on the 
$\cO_F$-valued continuous functions on $\Z_p$
defined by $h\mapsto \int_{\Z_p} h x^{-k} d\mu_f(x)$.
This is well defined, as $\bTrace(f)=0$ insures
the support of the measure $\mu_f$ is on $\Z_p^\times$.
Since $L$ is bounded with support in $\Z_p^\times$, 
it follows that $L$ is of the form 
$h\mapsto \int_{\Z_p} h\, d\mu_g(x)$
for a unique $g(T)\in\cO_F\(T\)^{\bTrace=0}$
that, by the construction, should have the same expansion 
as RHS of the lemma (cf.\,\cite{C-S} p.35 and Lemma 3.3.5).
Since $D=(1+T)\frac{d}{dT}=\frac{d}{dX}$, we find
$D^{k}(g)=f$. The uniqueness of $g$ (cf. \cite{C4}
Corollary of Theorem 3) in $\cO_F\(T\)^{\bTrace=0}$
concludes $D^{-k}(f)=g$.
\end{proof}

\subsection{Coleman's reciprocity law}
\label{sec3.2}

\begin{definition}[Coates-Wiles homomorphism]
\label{DefCW}
Let $\mathcal U_\infty(F)$ be the norm limit of principal units 
of $\{F(\mu_{p^n})\}_n$ and denote by 
$$
[\mathrm{Col}]:\mathcal U_\infty(F)\rightarrow
\cO_F\(T\)^{\times 1}\cap (\cO_F\(T\)^\times)^{\Norm=\sigma_F}, \ 
\epsilon=(\epsilon_n)\mapsto g_{\epsilon}(T)
$$ 
the Coleman map which is characterized by 
\begin{equation}
\label{ColemanSeries}
(\sigma_F^{-n}g_\epsilon)(T)|_{T=\zeta_{p^n}-1}
=\epsilon _n.
\end{equation} 
The Coates-Wiles homomorphism 
$
\phi^{CW}_{m,F}:\mathcal U_\infty(F)\rightarrow \mathcal O_F
$
is defined by the equality
\begin{equation} \label{defCW}
\log(g_\epsilon(T))=\sum_{m=0}^\infty
\frac{ \phi^{CW}_{m,F}(\epsilon)}{m!}X^m
\end{equation}
in $F\(X\)$ with $1+T=\exp(X)$.
\end{definition}

Let
$
\rec_n:F(\mu_{p^n})^\times\rightarrow 
G_{F(\mu_{p^n})}^\mathrm{ab}
$
be the reciprocity map of local class field theory
($n\le\infty$). 
When $n=\infty$, the reciprocity map induces an embedding 
$\rec_\infty:\cU_\infty(F)\hookrightarrow G_{F_\infty}^\ab$
(recall $F_\infty=F(\mu_{p^\infty})$, as defined 
in Notation of Introduction).
The above Coates-Wiles homomorphism $\phi^{CW}_{m,F}$ extends
uniquely to a $G_\infty=\Gal(F_\infty/F)$-homomorphism 
from $G_{F_\infty}^\ab$ into $F(m):=F\otimes\Z_p(m)$.
This extension and its image in
$\Hom_{G_\infty}(G_{F_\infty}^\ab, F(m))\cong H^1(F,F(m))$
will also be denoted by $\phi^{CW}_{m,F}$
 (cf.\, Bloch-Kato \cite{B-K} Section 2).

\begin{remark}
In (\ref{ColemanSeries}), we employ Bloch-Kato's normalization 
(\cite{B-K}, Theorem 2.2) on 
powers of $\sigma_F$, which differs from that in \cite{C2}.
The constant term of $g_\epsilon$ is $\equiv 1$ mod $p$, but may not be $1$.
This causes our summation in (\ref{defCW}) to start from $m=0$ which 
modifies \cite{B-K} (p.344).
\end{remark}

The Hilbert norm residue symbol
$$
(\ ,\ )_{p^n}:F(\mu_{p^n})^\times \times 
F(\mu_{p^n})^\times\rightarrow \mu_{p^n}
$$
is defined by the formula
$$
(a,b)_{p^n}=\bigl(a^{1/p^n}\bigr)^{\rec_n(b)-1}.
$$
We shall make use of Coleman's explicit reciprocity law on
Hilbert norm residue symbols in the following form:
Recall that Coleman \cite{C2} introduces a continuous linear functional
$\int_n: F(\!(T)\!)_1\to F$ (where $F(\!(T)\!)_1$ denotes the
ring of power series which converge on the unit open ball on $\C_p$)
by 
$$
\int_n f:=\frac{1}{p^n}\sum_{\zeta\in\mu_{p^n}}f(\zeta-1).
$$

\begin{theorem}[Coleman \cite{C2},\cite{C5}] \label{ColemanFormula}
Let $f(T)\in1+T\cO_F\(T\)$ satisfy $\Norm(f)=\sigma_F(f)$, and
$g=g_\epsilon(T)=[\mathrm{Col}](\epsilon)$ be
the Coleman power series associated to $\epsilon=(\epsilon_n)\in
\mathcal U_\infty$.
Then, 
$$
(f(\zeta_{p^n}-1), \epsilon_n)_{p^n}
=\zeta_{p^n}^{\mathrm{Tr}_{F/\Q_p}
\left(\int_n \cLog(f)\cdot D\cLog(\sigma_F^{-n} g_\epsilon)\right)
}
$$
\end{theorem}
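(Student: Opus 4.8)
The plan is to deduce this statement from Coleman's explicit reciprocity law in its standard form, making the bookkeeping of the $\sigma_F$-twists explicit so that it matches the Bloch--Kato normalization used in (\ref{ColemanSeries}). First I would recall the version of Coleman's reciprocity law as it appears in \cite{C2} (and as re-derived in \cite{C5}): for $f,g\in\cO_F\(T\)^\times$ with $g$ a Coleman power series for a norm-compatible unit system and $f$ satisfying the norm equation $\Norm(f)=\sigma_F(f)$, the Hilbert symbol $(f(\zeta_{p^n}-1),\epsilon_n)_{p^n}$ is given by $\zeta_{p^n}$ raised to the trace (from $F$ to $\Q_p$) of a ``residue pairing'' $\int_n\bigl(\cLog(f)\cdot D\cLog(g)\bigr)$ up to the appropriate shift of Frobenius powers on $g$. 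The content of the theorem is therefore mostly a matter of unwinding normalizations; I would organize it around three ingredients.

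The first ingredient is that $\cLog(f)$ makes sense and lands in $\cO_F\(T\)$: since $f\in 1+T\cO_F\(T\)\subset\cO_F\(T\)^{\times 1}$, the ordinary logarithm $\log(f(T)^p/(\sigma_F f)((1+T)^p-1))$ converges, and by Remark \ref{rem1.3}(1) the norm relation $\Norm(f)=\sigma_F(f)$ forces $\bTrace(\cLog f)=0$, so $D\cLog(\sigma_F^{-n}g_\epsilon)$ also sits in $\cO_F\(T\)^{\bTrace=0}$ and the functional $\int_n$ is being applied to a genuine element of $F\(T\)_1$. The second ingredient is the identification of the pairing: Coleman's law expresses $(a,\epsilon_n)_{p^n}$ via $\mathrm{Tr}_{F/\Q_p}$ of $\int_n$ of the product of the logarithmic derivative of $a$ with $\cLog$ of the Coleman series (or symmetrically), and one passes between the two shapes of the pairing using the adjointness $\int_n(Dh_1\cdot h_2)=-\int_n(h_1\cdot Dh_2)$ together with $D\cLog=(1-[p]\sigma_F)(Df/f)$; this is what converts the ``$\cLog(f)\cdot D\cLog(g)$'' shape into whatever form \cite{C2},\cite{C5} state it in. The third ingredient is the Frobenius shift: because (\ref{ColemanSeries}) interpolates $\epsilon_n$ by $(\sigma_F^{-n}g_\epsilon)(\zeta_{p^n}-1)$ rather than $g_\epsilon(\zeta_{p^n}-1)$, the Coleman series entering the reciprocity formula must be $\sigma_F^{-n}g_\epsilon$, and one checks that $\sigma_F^{-n}g_\epsilon$ still satisfies the hypotheses (it is again a unit power series with the correct norm relation, as $\sigma_F$ commutes with $\Norm$ up to the usual twist — compare the proof of Lemma \ref{lem3.1}(2)), so Coleman's law applies verbatim with $g$ replaced by $\sigma_F^{-n}g_\epsilon$.

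The main obstacle I expect is precisely the normalization reconciliation in the third ingredient: tracking how the arithmetic Frobenius $\sigma_F$ interacts with the linear functional $\int_n$, with the trace $\mathrm{Tr}_{F/\Q_p}$, and with the two different conventions for Coleman series in \cite{C2} versus Bloch--Kato \cite{B-K} (the source has already flagged in the Remark after Definition \ref{DefCW} that these differ). Concretely, one must verify that $\mathrm{Tr}_{F/\Q_p}\circ\int_n$ is $\sigma_F$-invariant in the right sense so that the $\sigma_F^{-n}$ on $g_\epsilon$ cannot be moved onto $f$ or absorbed, and that no stray power of $p$ or root of unity is introduced when switching conventions. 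Once these normalization checks are in place, the statement is an immediate transcription of the cited form of Coleman's reciprocity law, so I would keep the write-up short: cite \cite[Thm.]{C2} and \cite[\S 4 or \S 7]{C5} for the reciprocity law itself, cite Remark \ref{rem1.3}(1) for the $\bTrace=0$ claims, and spell out only the Frobenius-shift verification.
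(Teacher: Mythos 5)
Your overall strategy---quote Coleman's reciprocity law from \cite{C2},\,\cite{C5} and reconcile normalizations---is the same as the paper's, but the step you leave vaguest is exactly where the real content lies, and your plan does not supply it. A direct application of \cite{C2} Theorem 1 gives the exponent in the form $\mathrm{Tr}_{F/\Q_p}\bigl(\int_n \cLog(f)\cdot D\log(\sigma_F^{-n}g_\epsilon)\bigr)$, i.e.\ with the \emph{ordinary} logarithmic derivative $Dg_\epsilon/g_\epsilon$ of the Coleman series, not with $D\cLog$. Since $Dg/g-D\cLog(g)=[p]\sigma_F(Dg/g)$ (Remark \ref{rem1.3}\,(2)), the statement of the theorem requires the vanishing $\int_n\bigl(\cLog(f)\cdot[p]\sigma_F(Dg_\epsilon/g_\epsilon)\bigr)=0$. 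This does not come from the integration-by-parts/adjointness identity for $\int_n$ and $D$ that you invoke (which in any case only holds modulo $p^n$ and does not produce the needed cancellation); it comes from Coleman's projection formula \cite{C5} (4.4), which kills $\int_n$ of a product of a $\bTrace$-zero series with anything in the image of $[p]$, applied together with $\bTrace(\cLog f)=0$ (Remark \ref{rem1.3}\,(1)). You do record $\bTrace(\cLog f)=0$, but you use it only for well-definedness of the integrand; its actual role is this orthogonality. (Also, your inference ``$\bTrace(\cLog f)=0$, so $D\cLog(\sigma_F^{-n}g_\epsilon)$ lies in $\cO_F\(T\)^{\bTrace=0}$'' is a non sequitur: the latter follows from $\Norm(g_\epsilon)=\sigma_F(g_\epsilon)$, not from any property of $f$.)

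A second omission: Coleman's Theorem 1 in \cite{C2} carries an error term (involving the constant $k(0)$ in his notation), and one must observe that it vanishes here precisely because $\Norm(g_\epsilon)=\sigma_F(g_\epsilon)$; your plan never mentions it. By contrast, the Frobenius-shift bookkeeping that you single out as the main obstacle is essentially harmless: one simply applies Coleman's formula with the series $\sigma_F^{-n}g_\epsilon$, whose value at $\zeta_{p^n}-1$ is $\epsilon_n$ by the normalization (\ref{ColemanSeries}); no invariance of $\mathrm{Tr}_{F/\Q_p}\circ\int_n$ under $\sigma_F$ needs to be established. So the plan points at the right sources but misplaces the difficulty and omits the two checks (error term, and the $D\log\rightarrow D\cLog$ conversion via \cite{C5} (4.4)) that constitute the paper's actual proof.
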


\begin{proof}
For reader's convenience, we shall show how to derive this formula from
Coleman's work: A direct application of the formula in \cite{C2} Theorem 1
tells that the exponent of $\zeta_{p^n}$ in RHS is 
$$
\mathrm{Tr}_{F/\Q_p}
\bigl(\int_n \cLog(f)\cdot D\log(\sigma_F^{-n} g_\epsilon)\bigr).
$$
We remark that there exists no error term in the sense of Coleman
(cf. loc.cit.) as $\Norm(g_{\epsilon})=\sigma_F(g_{\epsilon})$
(hence $k(0)=0$ in his notation).
Since $\frac{Dg}{g}-D\cLog(g)=[p]\sigma_F(\frac{Dg}{g})$
(Remark \ref{rem1.3} (2)), it suffices to show
$\int_n \cLog(f)\cdot [p](\sigma_F\frac{Dg}{g})=0$. This follows from
\cite{C5} (4.4), as $\bTrace(\cLog(f))=0$ when $\Norm(f)=\sigma_F(f)$
(cf. Remark \ref{rem1.3} (1)).
\end{proof}

\section{Proof of Main formula}
In this section, we fix a root of unity $z$ in $F\cap \muprime$
(i.e., of order prime to $p$).
Note then that $F(\mu_{p^\infty},z^{1/p^\infty})=F_\infty$,
so that the $p$-adic polylogarithmic characters 
$\tilchi^z_m(\sigma)$, $\chi^z_m(\sigma)$ are defined
for $\sigma\in G_{F_\infty}$ and for arbitrary etale paths
$\gamma\in\pi_1^\pol(\ovec{01},z)$ as in Remark \ref{remPoly}.
Since the mod $p$ reduction gives an isomorphism 
$\muprime\isom \overline{\mathbb{F}}_{p}^\times$,
we can pick an integer $d\ge 1$ such that $z^{p^d}=z$.

\begin{lemma}
\label{specific_path}
There is an etale path $\gamma:\vec{01}\nyoroto z$ on
$\mathbb{P}^1_{\overline{\Q}_p}-\{0,1,\infty\}$
that determines a compatible branch $z^{1/p^n}$ {\rm ($n=1,2,\dots$)}
inside $\muprime$ with period $d$, i.e., $z^{1/p^n}=z^{1/p^{n+d}}$.
\end{lemma}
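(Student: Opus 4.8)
\textbf{Proof proposal for Lemma \ref{specific_path}.}
The plan is to build the path $\gamma$ from a concatenation of three elementary pieces and then track the induced branch of $z^{1/p^n}$ through each piece. First I would fix the standard straight path (the "canonical path" in the terminology of $\pi_1^\pol$) from $\vec{01}$ to the nearby point $\vec{10}$ along the real interval $(0,1)$, or rather work on the prime-to-$p$ cyclotomic cover; the key point is that the specialization homomorphism $\bigcup_n \overline{F}[t^{1/p^n}]\to\overline{F}$ attached to a path sends $t^{1/p^n}$ to a definite $p^n$-th root, so choosing $\gamma$ amounts to choosing compatible roots $z^{1/p^n}$. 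Since $z\in\muprime$ and reduction mod $p$ gives $\muprime\isom\overline{\mathbb F}_p^\times$, every $p^n$-th root of $z$ again lies in $\muprime$, so the branch lives inside $\muprime$ automatically; what must be arranged is the periodicity $z^{1/p^n}=z^{1/p^{n+d}}$.

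The heart of the construction is the following observation: inside $\muprime\cong\overline{\mathbb F}_p^\times$, multiplication by $p$ is an automorphism, so raising to the $p^d$-th power is the identity on the cyclic subgroup generated by $z$ (by choice of $d$ with $z^{p^d}=z$). Hence $z$ has a \emph{unique} $p$-th root inside $\langle z\rangle\subset\muprime$, namely $z^{p^{d-1}}$, and iterating, a unique compatible system $\{w_n\}$ with $w_n^{p}=w_{n-1}$, $w_0=z$, all lying in $\langle z\rangle$; explicitly $w_n=z^{p^{d\lceil n/d\rceil-n}}$, which visibly satisfies $w_n=w_{n+d}$. So the combinatorial data of the branch is forced and periodic. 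The remaining task is to realize this particular system of roots as the specialization of an actual \'etale path $\gamma\in\pi_1^\pol(\ovec{01},z)$: this is where I would invoke that $\pi_1^\pol(\ovec{01},z)$ is a torsor under $\pi_1^\pol$ and that, via \eqref{kummer}, the composite with $\proj'$ (or rather the Kummer component $\kappa_z$) parametrizes exactly the choice of compatible $p$-power roots $\{z^{1/p^n}\}$; since every such compatible system arises (the torsor is nonempty and the map to the set of root-systems is surjective), we may pick $\gamma$ realizing $\{w_n\}$.

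The main obstacle — really the only nontrivial point — is checking that the \emph{analytically/geometrically} defined path $\gamma$ (needed because Proposition \ref{prop2.2} and the measure $\bkappa_{z,\gamma}$ were set up for continuous curves, cf.\ Remark \ref{remPoly}) can be chosen to realize the prescribed branch: i.e.\ that the constraint "$z^{1/p^n}\in\muprime$ with period $d$" is compatible with $\gamma$ coming from an honest path rather than an abstract torsor element. I would handle this by noting (Remark \ref{remPoly}) that for $z$ a root of unity of order prime to $p$ the relevant Kummer quantities make sense for $\sigma\in G_{F_\infty}=G_{F(\zeta_{p^\infty},z^{1/p^\infty})}$ independently of an embedding $\overline F\hookrightarrow\C$, so it suffices to exhibit \emph{any} $\gamma\in\pi_1^\pol(\ovec{01},z)$ with the stated branch; adjusting an arbitrary defining path by an element of the inertia group over $0$ (the $x^{\Z_p}$ factor in \eqref{semi-direct}, which acts on $\{z^{1/p^n}\}$ through $\Z_p(1)$-translation) moves the branch through all compatible systems, and we simply select the element landing on $\{w_n\}$. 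This completes the construction.
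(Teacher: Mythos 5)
Your construction is essentially the paper's own proof: the paper likewise starts from an arbitrary defining path, notes that composing with $x^b$ ($b\in\Z_p$) multiplies the branch by $\zeta_{p^n}^b$, and uses that $(z^{1/p^n}\cdot\mu_{p^n})\cap\muprime$ consists of a single element to pin down $b$ modulo $p^n$ for every $n$ and pass to the limit; your explicit periodic system $w_n=z^{p^{d\lceil n/d\rceil-n}}$ together with the transitivity of the $x^{\Z_p}$-translation on compatible root systems plays exactly this role. One sentence in your first paragraph is false, however, and should be deleted: it is not true that ``every $p^n$-th root of $z$ again lies in $\muprime$, so the branch lives inside $\muprime$ automatically''---the $p^n$-th roots of $z$ form a coset $w_n\mu_{p^n}$ of which exactly one member has order prime to $p$, and an arbitrary path will in general produce a branch involving nontrivial $p$-power roots of unity (indeed, were the claim true, periodicity would need no arranging either, by the very uniqueness you invoke later). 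The misstatement is harmless only because your closing paragraph ignores it and does the correct thing, adjusting the branch through the full $\Z_p(1)$-translation to land on the unique prime-to-$p$, hence periodic, system---which is precisely the paper's argument.
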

\begin{proof}
If one changes the choice of $\gamma$ by composition with
$x^b$ ($b\in \Z_p$), then the induced branch changes
from $z^{1/p^n}$ to $z^{1/p^n}\zeta_{p^n}^b$. 
As there is only one element in
$(z^{1/p^n}\cdot\mu_{p^n})\cap \muprime$ under the assumption,
one may find a correct class $b$ (mod $p^n$) 
for each $n\ge 1$, and hence get a correct 
$b$ as their limit.
\end{proof}

We define the ``weight accelerator'' homomorphism 
$$
\mathcal O_F\(G_\infty\)\rightarrow \mathcal O_F\(G_\infty\);
\quad \omega\mapsto \omega(k)
$$
for $k\in\Z$ to be the $\cO_F$-linear extension of
the mapping $\sigma_a\mapsto a^k\sigma_a$ ($a\in \Z_p^\times$).
It is not difficult to see that, for every 
$\omega\in \cO_F\(G_\infty\)$,
\begin{equation} 
\label{eq.3.3}
D\cdot \omega=\omega(1)\cdot D  
\end{equation}
holds as operators on $\mathcal O_F\(T\)$.
Let us also introduce the basic element
$$
\omega_n:=
\sum_{{1\le i\le p^n, \, p\nmid i}}
\sigma_i\in\mathcal \Z_p\(G_\infty\).
$$

\begin{proposition}
Let $z\in \muprime$ and let 
$z^{1/p^n}$ {\rm ($n=1,2,\dots$)} be the compatible sequence
taken inside $\muprime$.
Let $F$ be a finite unramified extension 
of $\Q_p$ containing $z$ and hence all of $z^{1/p^n}$. 
Then, for $\epsilon=(\epsilon_n) \in \mathcal U_\infty(F)$ 
and for any positive integer $m$, we have:
\begin{equation*}
\left(
\bigl((f_{z^{1/p^n},c})^{\omega_n(m-1)}\bigr)(\zeta_{p^n}-1),
\epsilon_n\right)_{p^n} \\
=
\zeta_{p^n}^{(-1)^{m-1}(c^{1-m}-1)
\mathrm{Tr}_{F/\Q_p}\bigl(
\Li_m^{(p)}(z)
\cdot
(1-p^{m-1}\sigma_F)\phi^{CW}_{m,F}(\epsilon)
\bigr).
}
\end{equation*}
Here, if $z=1$, we understand 
$\Li_m^{(p)}(1)$ represents $L_p(m,\omega^{1-m})$. 
\label{prop3.1}
\end{proposition}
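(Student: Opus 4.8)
The plan is to apply Coleman's explicit reciprocity law (Theorem \ref{ColemanFormula}) to the power series $f:=(f_{z^{1/p^n},c})^{\omega_n(m-1)}=\prod_{1\le i\le p^n,\,p\nmid i}f_{z^{1/p^n},c}((1+T)^i-1)^{i^{m-1}}$, and then to compute the resulting integral $I_n:=\int_n\bigl(\cLog(f)\cdot D\cLog(\sigma_F^{-n}g_\epsilon)\bigr)$ modulo $p^n$. First one checks the hypotheses of Theorem \ref{ColemanFormula}: that $f\in 1+T\cO_F\(T\)$ is immediate from Lemma \ref{lem3.1}(1), since $1+T\cO_F\(T\)$ is stable under the $G_\infty$-action, products and $\Z_p$-powers; and $\Norm(f)=\sigma_F(f)$ follows from Lemma \ref{lem3.1}(2) together with the multiplicativity and $G_\infty$-equivariance of $\Norm$ (so $\Norm(f^{\omega_n(m-1)})=(\Norm f_{z^{1/p^n},c})^{\omega_n(m-1)}=(\sigma_Ff_{z^{1/p^n},c})^{\omega_n(m-1)}=\sigma_F(f)$, using that the coefficientwise Frobenius commutes with the $G_\infty$-action; note $\sigma_F$ and $\Norm$ both shift $z^{1/p^n}$ to $z^{1/p^{n-1}}$ consistently). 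Then the left-hand side of the Proposition equals $\zeta_{p^n}^{\mathrm{Tr}_{F/\Q_p}(I_n)}$, and since only the exponent $\bmod\ p^n$ matters it suffices to evaluate $I_n\bmod p^n$.

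Next I would rewrite both factors through Amice/Iwasawa measures on $\Z_p$. By the $\Z_p\(G_\infty\)$-linearity of $\cLog$ (Remark \ref{rem1.3}(1)), $\cLog(f)=\omega_n(m-1)\cdot\cLog(f_{z^{1/p^n},c})$; by Lemma \ref{lem3.1}(2)--(3) and Remark \ref{rem1.3}(1),(3), $\cLog(f_{z^{1/p^n},c})\in\cO_F\(T\)^{\bTrace=0}$, so its measure $\mu':=\mu_{\cLog(f_{z^{1/p^n},c})}$ is supported on $\Z_p^\times$ and, writing ${\mathscr F}_{z^{1/p^n}}^{(p)}$ for the Koblitz series of Lemma \ref{eq2.10}, satisfies $\int_{\Z_p^\times}\phi\,d\mu'=\int_{\Z_p^\times}\bigl(\phi(cx)-\phi(x)\bigr)x^{-1}\,d\mu_{z^{1/p^n}}^{(p)}(x)$; in particular, by \eqref{ColemanIntegral}, $\int_{\Z_p^\times}x^{1-m}d\mu'=(c^{1-m}-1)\,\Li^{(p)}_m(z^{1/p^n})$ (for $z=1$ one instead uses \eqref{eq2.18}, which replaces $\mu_{z^{1/p^n}}^{(p)}$ by the restriction of $-E_c$ to $\Z_p^\times$, and \eqref{eq2.17}, supplying the $L_p(m,\omega^{1-m})$-term). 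On the unit side, \eqref{defCW} gives the expansion $D\cLog(\sigma_F^{-n}g_\epsilon)=\sum_{j\ge0}\frac{\sigma_F^{-n}(1-p^{j}\sigma_F)\phi^{CW}_{j+1,F}(\epsilon)}{j!}X^{j}$, so its measure $\mu_B:=\mu_{D\cLog(\sigma_F^{-n}g_\epsilon)}$ (supported on $\Z_p^\times$ since $\Norm(\sigma_F^{-n}g_\epsilon)=\sigma_F(\sigma_F^{-n}g_\epsilon)$) has moments $\int_{\Z_p^\times}x^{j}d\mu_B=\sigma_F^{-n}(1-p^{j}\sigma_F)\phi^{CW}_{j+1,F}(\epsilon)$.

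The core is the evaluation of $I_n$. Using the Amice transforms $\cLog(f_{z^{1/p^n},c})(\zeta_{p^n}^{i}-1)=\int_{\Z_p^\times}\zeta_{p^n}^{ix}d\mu'(x)$ and $D\cLog(\sigma_F^{-n}g_\epsilon)(\zeta_{p^n}-1)=\int_{\Z_p^\times}\zeta_{p^n}^{y}d\mu_B(y)$, the expansion $\cLog(f)(\zeta_{p^n}-1)=\sum_{1\le i\le p^n,\,p\nmid i}i^{m-1}\cLog(f_{z^{1/p^n},c})(\zeta_{p^n}^{i}-1)$, and the orthogonality $\tfrac1{p^n}\sum_{\zeta\in\mu_{p^n}}\zeta^{t}=\mathbf 1_{p^n\Z_p}(t)$, I obtain
\[ I_n=\sum_{1\le i\le p^n,\,p\nmid i}i^{m-1}\int_{\Z_p^\times}\mu_B\bigl((-ix)+p^n\Z_p\bigr)\,d\mu'(x). \]
For fixed $x\in\Z_p^\times$, sending $i$ to the representative $b$ of $-ix$ permutes the units mod $p^n$ and $i^{m-1}\equiv(-1)^{m-1}b^{m-1}x^{1-m}\pmod{p^n}$; interchanging the finite $i$-sum with the $\mu'$-integral and using $\sum_b b^{m-1}\mu_B(b+p^n\Z_p)\equiv\int_{\Z_p^\times}x^{m-1}d\mu_B\pmod{p^n}$ gives
\[ I_n\equiv(-1)^{m-1}\Bigl(\textstyle\int_{\Z_p^\times}x^{m-1}d\mu_B\Bigr)\Bigl(\textstyle\int_{\Z_p^\times}x^{1-m}d\mu'\Bigr)\equiv(-1)^{m-1}(c^{1-m}-1)\,\Li^{(p)}_m(z^{1/p^n})\,\sigma_F^{-n}\!\bigl((1-p^{m-1}\sigma_F)\phi^{CW}_{m,F}(\epsilon)\bigr)\pmod{p^n}. \]
Finally, since $\sigma_F$ acts on $\muprime$ by $p$-th power, Lemma \ref{specific_path} gives $z^{1/p^n}=\sigma_F^{-n}(z)$, hence $\Li^{(p)}_m(z^{1/p^n})=\sigma_F^{-n}\Li^{(p)}_m(z)$; absorbing the two factors $\sigma_F^{-n}$ into the $\sigma_F$-invariant trace yields $\mathrm{Tr}_{F/\Q_p}(I_n)\equiv(-1)^{m-1}(c^{1-m}-1)\,\mathrm{Tr}_{F/\Q_p}\bigl(\Li^{(p)}_m(z)(1-p^{m-1}\sigma_F)\phi^{CW}_{m,F}(\epsilon)\bigr)\pmod{p^n}$, which is exactly the asserted exponent.

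The step I expect to be the main obstacle is making rigorous all the $\bmod\ p^n$ reductions in the last display — in particular $\sum_b b^{m-1}\mu_B(b+p^n\Z_p)\equiv\int_{\Z_p^\times}x^{m-1}d\mu_B\pmod{p^n}$ and the interchange of the finite $i$-sum with the $\mu'$-integral — which rely crucially on $\cLog$ being the \emph{integral} logarithm, so that $\mu'$ and $\mu_B$ are genuine $\cO_F$-valued measures, and on $\bTrace(\cdot)=0$ forcing their supports into $\Z_p^\times$ (so that $x^{-1}$, $D^{-1}$ and negative moments make sense). A secondary point is the degenerate case $z=1$, where ${\mathscr F}_1^{(p)}$ has a pole at $T=0$ and the computation must be routed through the Bernoulli distribution $E_c$ and the $p$-adic Mellin formula \eqref{eq2.17}, with corresponding care for the sign and the factor $c^{1-m}-1$ threaded through the $D^{-1}$'s.
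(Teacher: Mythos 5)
Your argument is correct and follows the same route as the paper's proof of Proposition \ref{prop3.1}: apply Theorem \ref{ColemanFormula} to $f=(f_{z^{1/p^n},c})^{\omega_n(m-1)}$, identify the Coleman integral modulo $p^n$, up to the sign $(-1)^{m-1}$, with the product of the two values $\bigl(D^{-(m-1)}\cLog f_{z^{1/p^n},c}\bigr)(0)$ and $\bigl(D^{m}\cLog(\sigma_F^{-n}g_\epsilon)\bigr)(0)$, evaluate these by Lemma \ref{lem3.1}\,(3), Lemma \ref{lem3.4} and (\ref{defCW}), and absorb the two factors $\sigma_F^{-n}$ into $\mathrm{Tr}_{F/\Q_p}$. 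The one genuine difference is at the pivotal congruence: the paper quotes it from Coleman, \cite{C5} Lemma (4.6), whereas you reprove it directly via Amice transforms, the orthogonality $\frac{1}{p^n}\sum_{\zeta\in\mu_{p^n}}\zeta^{t}=\mathbf{1}_{p^n\Z_p}(t)$, the substitution $b\equiv -ix$, and Riemann-sum congruences for $\cO_F$-valued measures; this is sound precisely because $\bTrace=0$ forces both measures onto $\Z_p^\times$ (so $x^{-1}$ and negative moments are legitimate), and it buys a self-contained proof at the cost of redoing a result available in \cite{C5}. Your verification that $f$ satisfies the hypotheses of Theorem \ref{ColemanFormula} (constant term $1$, and $\Norm f=\sigma_F f$ via multiplicativity and $G_\infty$-equivariance of $\Norm$) fills in a point the paper leaves implicit. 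One warning on the $z=1$ branch you flagged: equation (\ref{eq2.18}) as printed is off by a sign against (\ref{eq2.16})--(\ref{eq2.17}); the correct identity is $-\bigl({\mathscr F}_1^{(p)}(T)-c\,{\mathscr F}_1^{(p)}((1+T)^c-1)\bigr)=\sum_{n\ge 0}\bigl(\int_{\Z_p^\times}x^n\,dE_c\bigr)X^n/n!$, which is the form the paper's proof actually uses, so taking (\ref{eq2.18}) literally and replacing the measure by the restriction of $-E_c$, as your parenthetical does, would flip the sign of the $z=1$ case; with the corrected sign your computation yields $(c^{1-m}-1)L_p(m,\omega^{1-m})$, matching the $z\ne 1$ case exactly as asserted.
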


\begin{proof}
We denote the left hand side by $\zeta_{p^n}^\alpha$. 
Then, according to Theorem \ref{ColemanFormula} 
and \cite{C5} Lemma (4.6),
we have
\begin{align*}
\alpha
&\equiv \mathrm{Tr}_{F/\Q_p}
\left(
\int_n 
{\omega_n(m-1)}\cLog( f_{z^{1/p^n},c}(T))
\cdot
D\cLog(\sigma_F^{-n}g_\epsilon) 
\right) \\
&\equiv
\mathrm{Tr}_{F/\Q_p}
\left(
(-1)^{m-1}\bigl(D^{-(m-1)}\cLog f_{z^{1/p^n},c}\bigr)(0)
\cdot \bigl(D^m\cLog (\sigma_F^{-n}g_\epsilon) \bigr)(0)
\right)
\qquad  
(\mathrm{mod}\ p^n).
\end{align*}
Observe from (\ref{defCW}) that 
\begin{equation*}
\bigl(D^m\cLog (\sigma_F^{-n}g_\epsilon) \bigr)(0)
=D^m\cLog (\sigma_F^{-n}g_\epsilon) |_{T=0}
=\sigma_F^{-n}(1-p^{m-1}\sigma_F)\phi^{CW}_{m,F}(\epsilon).
\end{equation*} 
On the other hand, 
noting that $(1-c\sigma_c)(-m)=1-c^{1-m}\sigma_c $ 
and hence 
$D^{-m}(1-c\sigma_c)=(1-c^{1-m}\sigma_c )D^{-m}$ 
by (\ref{eq.3.3}),
we obtain for $z\ne 1$,
\begin{align*}
D^{-m}D\cLog\left(f_{z^{1/p^n},c}(T)\right) 
&=D^{-m}(1-c\sigma_c)(-{\mathscr F}_{z^{1/p^n}}^{(p)}(T)) 
\quad(\text{Lemma \ref{lem3.1} (3)})
\\
&=-(1-c^{1-m}\sigma_c)D^{-m}({\mathscr F}_{z^{1/p^n}}^{(p)}(T)) 
\\
&=-(1-c^{1-m}\sigma_c)\sum_{n=0}^\infty
\left(
\int_{\Z_p^\times} x^{n-m} d\mu_{z^{1/p^n}}(x)
\right)
\frac{X^n}{n!},
\end{align*}
where Lemma \ref{lem3.4} is used in the last equality.
Hence 
$$
\bigl(D^{-m+1}\cLog f_{z^{1/p^n},c}\bigr)(0)=
(c^{1-m}-1)\Li^{(p)}_m(z^{1/p^n})=
(c^{1-m}-1)\bigl(\sigma_F^{-n}\Li^{(p)}_m(z)\bigr)
$$ 
according to our choice of $z^{1/p^n}$.
Therefore 
\begin{align*}
\alpha
&\equiv (c^{1-m}-1)
\mathrm{Tr}_{F/\Q_p} \left(\sigma_F^{-n}\Li^{(p)}_m(z)\cdot
\sigma_F^{-n}(1-p^{m-1}\sigma_F)\phi^{CW}_{m,F}(\epsilon)
\right) \\
&= (c^{1-m}-1)
\mathrm{Tr}_{F/\Q_p} \left(
\Li^{(p)}_m(z)\cdot
(1-p^{m-1}\sigma_F)\phi^{CW}_{m,F}(\epsilon)
\right)
\end{align*}
as desired.
For $z=1$, we obtain similarly
\begin{align*}
D^{-m}D\cLog\left( f_{1,c}(T) \right) 
&=D^{-m}(1-c\sigma_c)(-{\mathscr F}_1^{(p)}(T)) 
\quad(\text{Lemma \ref{lem3.1} (3)})
\\
&=D^{-m}\sum_{n=0}^\infty 
\left(\int_{\Z_p^\times} x^{n} dE_c(x) \right)
\frac{X^n}{n!} 
\quad(\ref{eq2.18})
\\
&=\sum_{n=0}^\infty
\left(\int_{\Z_p^\times} x^{n-m} dE_c(x)\right)
\frac{X^n}{n!}
\quad(\text{Lemma \ref{lem3.4}}),
\end{align*}
hence, by (\ref{eq2.17}),  
$\bigl(D^{-m+1}\cLog f_{1,c}\bigr)(0)=
(c^{1-m}-1)L_p(m,\omega^{1-m})$.
This completes the proof of the proposition.
\end{proof}

\begin{lemma}
\label{lem3.3}
Let $m$ be a fixed positive integer. 
Then, there is an integer $N_m$ such that
for every $n\ge 1$, $p^n$ divides  
$N_m\sum_{\substack{1\le a\le p^n \\ p\nmid a}} a^m$.
\end{lemma}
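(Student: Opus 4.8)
The plan is to reduce the statement to an elementary $p$-adic valuation estimate by expressing the power sum through Faulhaber's formula. Put $\Sigma_m(X):=\sum_{a=1}^{X}a^m$; by the classical identity
\[
\Sigma_m(X)=\frac{B_{m+1}(X+1)-B_{m+1}}{m+1},
\]
this is a polynomial in $X$ of degree $m+1$ with rational coefficients and vanishing constant term (indeed $\Sigma_m(0)=0$ since $m\ge 1$), so write $\Sigma_m(X)=\sum_{j=1}^{m+1}c_jX^j$ with $c_j\in\Q$. Since only the finitely many rationals $c_1,\dots,c_{m+1}$ occur here and their denominators are controlled by those of $B_1,\dots,B_m$ (squarefree and dividing $\prod_{q\le m+1}q$ by the von Staudt--Clausen theorem) together with the single factor $1/(m+1)$, I would fix a positive integer $M=M(m)$ with $Mc_j\in\Z$ for all $j$.

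The key step is to sieve out the multiples of $p$: since $\{\,a:1\le a\le p^n,\ p\mid a\,\}=\{\,pb:1\le b\le p^{n-1}\,\}$, one has for every $n\ge 1$
\[
\sum_{\substack{1\le a\le p^n\\ p\nmid a}}a^m=\Sigma_m(p^n)-p^m\Sigma_m(p^{n-1})=\sum_{j=1}^{m+1}c_j\,p^{nj}\bigl(1-p^{m-j}\bigr).
\]
I then set $N_m:=pM$ and bound the $p$-adic valuation of each term $N_mc_j\,p^{nj}(1-p^{m-j})$. The term with $j=m$ vanishes identically. For $1\le j<m$ we have $v_p(1-p^{m-j})=0$, so the valuation is at least $1+v_p(Mc_j)+nj\ge n+1$. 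For the top term $j=m+1$ the factor $1-p^{-1}$ has $v_p=-1$, absorbed by the extra $p$ in $N_m$, and $n(m+1)\ge n$, so the valuation is at least $v_p(Mc_{m+1})+n(m+1)\ge n$. Summing the terms shows $p^n$ divides $N_m\sum_{\substack{1\le a\le p^n,\ p\nmid a}}a^m$ for all $n\ge 1$, which is the assertion.

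The argument is essentially bookkeeping, so there is no serious obstacle; the one point that needs care is the top-degree coefficient $c_{m+1}=1/(m+1)$, whose pairing with the factor $1-p^{m-(m+1)}=1-p^{-1}$ produces a genuine pole at $p$, and this is precisely why $N_m$ must carry one power of $p$ beyond a common denominator of the $c_j$. The borderline case $m=1$, where $\Sigma_1(X)=\tfrac12 X^2+\tfrac12 X$ and $p$ is odd, is already covered by the $j=m$ and $j=m+1$ analysis.
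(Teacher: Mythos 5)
Your proof is correct and takes essentially the same route as the paper: both sieve out the multiples of $p$ via $\sum_{p\nmid a}a^m=\Sigma_m(p^n)-p^m\Sigma_m(p^{n-1})$, expand by the classical Bernoulli (Faulhaber) formula, and take $N_m$ to clear the denominators coming from $m+1$ and $B_0,\dots,B_m$. The only difference is your extra factor of $p$ in $N_m=pM$, which is harmless but in fact unnecessary, since the top term $j=m+1$ already has $p$-adic valuation at least $n(m+1)-1\ge n$ once the denominator of $c_{m+1}=1/(m+1)$ is cleared.
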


\begin{proof}
In fact, from the classical Bernoulli formula of power sums, it follows that
$$
\sum_{\substack{1\le a\le p^n \\ p\nmid a}} a^m
=
\frac{p^n}{m+1}\sum_{k=0}^{m}\binom{m+1}{k} B_k\cdot
(p^{n(m-k)}-p^{(n-1)(m-k)+m}).
$$
Thus, any common multiple of $m+1$ and of the denominators of 
Bernoulli numbers $B_0,\dots, B_m$ will do the role of $N_m$. 
\end{proof}

Now, we shall show our main formula:

\begin{theorem}
\label{fullformula}
For $z\in \mu(\Z_p^\ur)$, 
let $\gamma:\ovec{01}\nyoroto z\in\muprime$ 
be a specific path of Lemma \ref{specific_path}, and let
$F$ be a finite unramified extension of $\Q_p$ containing $z$.
Suppose that $\sigma\in G_F$ lies in the image of
$
\mathcal U_\infty(F)\overset{\rec_\infty}{\longrightarrow}G_{F_\infty}^\ab$.
Then, for $m\ge 1$,
\begin{equation*} 
\chi_m^z(\sigma) =
{(-1)^{m}\,
\mathrm{Tr}_{F/\Q_p}\bigl(
\Li_m^{(p)}(z)\cdot
(1-p^{m-1}\sigma_F)\phi^{CW}_{m,F}(\sigma)
\bigr).
}
\tag{1}
\end{equation*}
Moreover, if $m\ge 2$, then,
\begin{equation*}
\tilchi^z_m(\sigma) =
(-1)^{m}
\mathrm{Tr}_{F/\Q_p}
\bigl(Li_m^{(p)}(z)\phi^{CW}_{m,F}(\sigma)
\bigr).
\tag{2}
\end{equation*}
Here, if $z=1$, we understand 
$\Li_m^{(p)}(1)=L_p(m,\omega^{1-m})$. 
\label{prop4.4}
\end{theorem}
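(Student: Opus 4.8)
The plan is to combine Proposition~\ref{prop3.1} with the explicit Kummer-property description of $\chi_m^z$ (formula (\ref{chiformula})) via Coleman's reciprocity law (Theorem~\ref{ColemanFormula}), using the reciprocity map to translate between the unit $\epsilon$ and the Galois element $\sigma=\rec_\infty(\epsilon)$. First I would unwind the definition of the Hilbert norm residue symbol: for $\sigma=\rec_n(\epsilon_n)$ and a power series $f\in 1+T\cO_F\(T\)$, the pairing $(f(\zeta_{p^n}-1),\epsilon_n)_{p^n}=\zeta_{p^n}^{?}$ computes, on the nose, the Kummer cocycle of $\sigma$ applied to $p^n$-th roots of $f(\zeta_{p^n}-1)$. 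The key observation is that for $f=f_{z^{1/p^n},c}$, the quantity $(f_{z^{1/p^n},c})^{\omega_n(m-1)}(\zeta_{p^n}-1)$ is — after taking into account the chosen compatible branch $z^{1/p^n}$ inside $\muprime$ and the translation action $\sigma_a f(T)=f((1+T)^a-1)$ — essentially the product $\prod_{1\le a\le p^n,\,p\nmid a}(1-\zeta_{p^n}^a z^{1/p^n})^{a^{m-1}}$ up to the monomial factor coming from the $(1+T)^{\pm 1/2}$ terms in the numerator/denominator of $f_{z,c}$. Comparing with (\ref{chiformula}), the left-hand side of Proposition~\ref{prop3.1} is then $\zeta_{p^n}^{\chi_m^z(\sigma)}$ raised to an explicit $c$-dependent and $n$-independent factor.

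Concretely, the steps in order: (i) check that $\sigma_a f_{z,c}(T)$ at $T=\zeta_{p^n}-1$ produces the factor $(1-\zeta_{p^n}^a z^{1/p^n})/(1-\zeta_{p^n}^{ac}z^{c/p^n})$ (times a harmless root of unity from the half-integer powers, which disappears when we raise to the $p^n$-th root and invoke that $z^{1/p^n}\in\muprime$ has order prime to $p$); (ii) apply the weight accelerator $\omega_n(m-1)=\sum_{p\nmid i}i^{m-1}\sigma_i$ to get the product $\prod_{p\nmid i}(1-\zeta_{p^n}^i z^{1/p^n})^{i^{m-1}}\big/\prod_{p\nmid i}(1-\zeta_{p^n}^{ic}z^{c/p^n})^{i^{m-1}}$, and recognize — after reindexing $i\mapsto ic$ in the denominator and using that $c$ is a unit mod $p^n$ — that the denominator is the same Soule-type product for the exponent system $\{(i c^{-1})^{m-1}\}=c^{1-m}\{i^{m-1}\}$ up to permutation, so the whole expression is $\zeta_{p^n}^{(1-c^{1-m})\chi_m^z(\sigma)}$, hence $(1-c^{1-m})\chi_m^z(\sigma)\equiv$ (the exponent in Proposition~\ref{prop3.1}) mod $p^n$ for all $n$; (iii) since $\phi^{CW}_{m,F}$ for $\sigma=\rec_\infty(\epsilon)$ matches the $\phi^{CW}_{m,F}(\epsilon)$ appearing in Proposition~\ref{prop3.1} (by Bloch--Kato's normalization), conclude $(1-c^{1-m})\chi_m^z(\sigma)=(-1)^{m-1}(c^{1-m}-1)\mathrm{Tr}_{F/\Q_p}(\cdots)$, i.e. $\chi_m^z(\sigma)=(-1)^m\mathrm{Tr}_{F/\Q_p}(\Li_m^{(p)}(z)(1-p^{m-1}\sigma_F)\phi^{CW}_{m,F}(\sigma))$, which is (1); (iv) for (2), use Lemma~\ref{tilchi.and.chi} to write $\tilchi_m^z=\sum_{k\ge 0}p^{k(m-1)}\chi_m^{z^{1/p^k}}$, plug in (1) for each $z^{1/p^k}$, use $\sigma_F^{-k}\Li_m^{(p)}(z)=\Li_m^{(p)}(z^{1/p^k})$ together with $\mathrm{Tr}_{F/\Q_p}\circ\sigma_F^{-k}=\mathrm{Tr}_{F/\Q_p}$, and sum the geometric-type series $\sum_k p^{k(m-1)}(1-p^{m-1}\sigma_F)\sigma_F^{-k}$ acting on $\phi^{CW}_{m,F}(\sigma)$; the telescoping collapses to just $\phi^{CW}_{m,F}(\sigma)$, giving (2). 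The convergence of this last sum (for $m\ge 2$) is exactly where Lemma~\ref{lem3.3} enters, controlling the $p$-adic size of the partial Soule products so that the rearrangement is legitimate.

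I expect the main obstacle to be step (ii): carefully matching $((f_{z^{1/p^n},c})^{\omega_n(m-1)})(\zeta_{p^n}-1)$, with its $c$-regularization built into the very shape of $f_{z,c}$, against the un-regularized product in (\ref{chiformula}), and tracking that the half-integer powers $(1+T)^{\pm 1/2}$ contribute only roots of unity of order prime to $p$ (so vanish after extracting $p^n$-th roots and reducing the Kummer cocycle). One must also handle the $z=1$ case separately, where $f_{1,c}$ carries an extra factor of $c$ and $\Li_m^{(p)}(1)$ is interpreted as $L_p(m,\omega^{1-m})$; there the comparison is with the classical Soule character and (\ref{eq2.16})--(\ref{eq2.17}), already packaged in Proposition~\ref{prop3.1}. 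A secondary subtlety is that $\chi_m^z(\sigma)$, $\tilchi_m^z(\sigma)$ a priori depend on the path $\gamma$, but by Lemma~\ref{lem2.1} and the specific choice in Lemma~\ref{specific_path} the relevant values are path-independent on the Galois subgroup we restrict to, so the identification with the Coleman-series side is well posed.
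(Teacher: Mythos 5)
Your outline follows the paper's own route: evaluate $\bigl((f_{z^{1/p^n},c})^{\omega_n(m-1)}\bigr)(\zeta_{p^n}-1)$, match it modulo $F(\zeta_{p^n})^{\times p^n}$ with the Soule-type product in (\ref{chiformula}) so that the Hilbert symbol of Proposition~\ref{prop3.1} reads $(1-c^{1-m})\chi_m^z(\sigma)$ mod $p^n$, pass to the limit to get (1), and then obtain (2) from Lemma~\ref{tilchi.and.chi} by a telescoping sum using $\Li_m^{(p)}(z^{1/p^k})=\sigma_F^{-k}\Li_m^{(p)}(z)$ and $\mathrm{Tr}_{F/\Q_p}\circ\sigma_F^{-k}=\mathrm{Tr}_{F/\Q_p}$. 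Your step (iv) sums the infinite series $\sum_k p^{k(m-1)}\sigma_F^k(1-p^{m-1}\sigma_F)$, while the paper exploits the periodicity $z^{1/p^{k+d}}=z^{1/p^k}$ of the specific branch to reduce to a finite telescoping with the factor $(1-p^{(m-1)d})^{-1}$; these are equivalent, and your version is fine since the operator series converges trivially for $m\ge 2$.

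There is, however, one concrete misstep: the role of Lemma~\ref{lem3.3}. It has nothing to do with convergence or rearrangement in step (iv) — that convergence needs no lemma. It is needed in step (ii) for the case $z=1$, which you dismiss as ``already packaged in Proposition~\ref{prop3.1}.'' For $z=1$ the series $f_{1,c}$ carries the extra constant factor $c$, so applying $\omega_n(m-1)$ and evaluating at $T=\zeta_{p^n}-1$ produces the constant $c^{\sum_{p\nmid a,\,a\le p^n}a^{m-1}}$ in front of the Soule product; to make this harmless one must choose $c\in(1+p\Z_p)^{N_m}$, and Lemma~\ref{lem3.3} is exactly what guarantees that the resulting constant lies in $(1+p\Z_p)^{p^n}$, hence dies modulo $p^n$-th powers in the Hilbert symbol. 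Without this choice your identification of the left-hand side of Proposition~\ref{prop3.1} with $\zeta_{p^n}^{(1-c^{1-m})\chi_m^1(\sigma)}$ is not justified. A smaller slip: the constants coming from the half-integer powers $(1+T)^{\pm a/2}$ at $T=\zeta_{p^n}-1$ are $p$-power roots of unity, not roots of unity of order prime to $p$; they are harmless because $\sigma\in G_{F_\infty}$ fixes $\mu_{p^\infty}$ (equivalently, they are absorbed into the factor $\xi\in\mu_{p^n}\cdot(1+p\Z_p)^{p^n}$), not for the reason you give.
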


\begin{proof}
(1)
Choose $c\in\Z$ so that $p\nmid c$ and $c\ne 1$.
When $z=1$, we moreover impose $c\in (1+p\Z_p)^{N_m}$
for some $N_m$ as in Lemma \ref{lem3.3}. 
For $n\ge 1$, pick an integer 
$\bar c=\bar c_n\in \Z$ such that
$c\bar{c}\equiv 1$ mod $p^n$.
Then, by simple computation, it follows that
$$
\bigl((f_{z^{1/p^n},c})^{\omega_n(m-1)}\bigr)(\zeta_{p^n}-1)
\equiv
\xi\cdot
\prod_{\substack{1\le a\le p^n \\ p\nmid a}}
(1-z^{1/p^n}\zeta_{p^n}^a)^{a^{m-1}(1-\bar{c}^{m-1}})
$$
modulo $F(\zeta_{p^n})^{\times p^n}$
for some constant $\xi$ that lies in
$\mu_{p^n}\cdot (1+p\Z_p)^{p^n}$.
Putting this into Proposition \ref{prop3.1}, we obtain
\begin{equation} \label{HojoGodo}
(1-\bar c^{m-1})\chi_m^z(\sigma)\equiv
{(-1)^{m-1}(c^{1-m}-1)
\mathrm{Tr}_{F/\Q_p}\bigl(
\Li_m^{(p)}(z)\cdot
(1-p^{m-1}\sigma_F)\phi^{CW}_{m,F}(\sigma)
\bigr)
}
\end{equation}
mod $p^n$, 
where, if $z=1$, $\Li_m^{(p)}(z)$ stands for $L_p(m,\omega^{1-m})$.
Replacing $(1-\bar c^{m-1})$ by $(1-c^{1-m})$ 
in LHS of (\ref{HojoGodo}),
and then passing over $n\to \infty$, we obtain
the formula of proposition.

(2) Suppose $z^{p^d}=z$ for $d\in\Z_{>0}$.
Then, our specific choice of $\gamma$ makes
the sequence $\{z^{1/p^k}\}_{k\in\Z}$ cyclic of period $d$.
By using Lemma \ref{tilchi.and.chi}, 
we can relate $\tilchi_m^z(\sigma)$ to $\chi_m^z(\sigma)$ 
in such a way that $\tilchi_{m}^z(\sigma)
=\sum_{k=0}^{d-1}\frac{p^{(m-1)k}}{1-p^{(m-1)d}}
\chi_{m}^{z^{1/p^k}}(\sigma)
$
(cf.\,also \cite{W3} Proposition 5.3 (i)).
Putting the formula (1) into this, 
and replacing 
$\Li_m^{(p)}(z^{1/p^k})\cdot\sigma_F\bigl(
\phi_{m,F}^{CW}(\sigma)\bigr)$
by $\sigma_F\bigl(\Li_m^{(p)}(z^{1/p^{k+1}})\cdot
\phi_{m,F}^{CW}(\sigma)\bigr)$,
we may rewrite $\tilchi_{m}^z(\sigma)$ 
with 
$\alpha_k:=p^{(m-1)k}
\bigl(\Li_m^{(p)}(z^{1/p^k})\cdot\phi_{m,F}^{CW}(\sigma)
\bigr)$ as follows:
\begin{align*}
\tilchi_m^z(\sigma)&=
\frac{(-1)^m}{1-p^{(m-1)d}}
\sum_{k=0}^{d-1}
\mathrm{Tr}_{F/\Q_p}
\bigl(\alpha_k-\sigma_F(\alpha_{k+1})\bigr)
\\
&=
\frac{(-1)^m}{1-p^{(m-1)d}}
\sum_{k=0}^{d-1}
\bigl(
\mathrm{Tr}_{F/\Q_p}(\alpha_k)-
\mathrm{Tr}_{F/\Q_p}(\alpha_{k+1}) 
\bigr)
\\
&=\frac{(-1)^m}{1-p^{(m-1)d}}
\mathrm{Tr}_{F/\Q_p}(\alpha_0-\alpha_d).
\end{align*}
But since $z^{1/p^d}=z$, we have $\alpha_d=p^{(m-1)d}\alpha_0$.
This enables us to cancel the denominator $(1-p^{(m-1)d})^{-1}$ 
in the above expression,
and hence to conclude $\tilchi_m^z(\sigma)=(-1)^m\mathrm{Tr}_{F/\Q_p}(\alpha_0)$ 
as asserted in (2).   
\end{proof}

\begin{remark}
\label{finalremark}
The original Coleman-Ihara formula (\ref{FCW}) in Introduction
results from the above 
Theorem \ref{fullformula} (1) with $z=1$ and $F=\Q_p$. 
Note $(-1)^m=-1$ for odd $m$.
\end{remark}

We are arriving at the main Theorem \ref{mainthm} 
of Introduction: 

\begin{corollary} 
\label{maincor}
Let $\gamma:\ovec{01}\nyoroto z\in\muprime$ 
be a specific path of Lemma \ref{specific_path}, and let
$F$ be a finite unramified extension of $\Q_p$ containing $z$.
Suppose that $\sigma\in G_F$ lies in the image of
$\mathcal U_\infty(F)\overset{\rec_\infty}{\longrightarrow}G_{F_\infty}^\ab$.
Then, for $m\ge 2$, 
\begin{equation*}
\ell i_m(z,\gamma)(\sigma)
=\frac{-1}{(m-1)!}
\mathrm{Tr}_{F/\Q_p}\left( 
\left\{
\left(
1-\frac{\sigma_F}{p^m}
\right)
\Li_m^\padic(z) 
\right\}
\phi_{m,F}^{CW}(\sigma)
\right).
\end{equation*}
Here, if $z=1$, we understand 
$(1-\frac{\sigma_F}{p^m})\Li_m^\padic(1)=    L_p(m,\omega^{1-m})$. 
\label{thm3.5}
\end{corollary}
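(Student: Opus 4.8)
The plan is to deduce Corollary \ref{maincor} directly from Theorem \ref{fullformula}, part (2), by unwinding the relationship between the characters $\tilchi^z_m$, $\chi^z_m$ and the Galois polylogarithm $\ell i_m(z,\gamma)$, together with the relationship between the companion function $\Li^{(p)}_m$ and Coleman's $\Li^\padic_m$. First I would recall from Proposition \ref{prop2.2} (which, by Remark \ref{remPoly}, applies to $\sigma\in G_{F_\infty}$ since $F(\zeta_{p^\infty},z^{1/p^\infty})=F_\infty$ when $z$ has order prime to $p$) that
\begin{equation*}
\ell i_m(z,\gamma)(\sigma)=(-1)^{m-1}\frac{\tilchi^z_m(\sigma)}{(m-1)!}
\end{equation*}
for $\sigma$ in the relevant local Galois group, in particular for $\sigma$ in the image of $\rec_\infty$. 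Then I would substitute the expression for $\tilchi^z_m(\sigma)$ furnished by Theorem \ref{fullformula}(2), namely $\tilchi^z_m(\sigma)=(-1)^m\,\mathrm{Tr}_{F/\Q_p}\bigl(\Li^{(p)}_m(z)\phi^{CW}_{m,F}(\sigma)\bigr)$, obtaining an expression with overall sign $(-1)^{m-1}(-1)^m/(m-1)!=-1/(m-1)!$.

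The remaining point is to rewrite the companion function $\Li^{(p)}_m(z)$ in terms of $\Li^\padic_m(z)$ and the Frobenius. By definition (recalled in \S 2.2), $\Li^{(p)}_k(z)=\Li^\padic_k(z)-p^{-k}\Li^\padic_k(z^p)$. Since $z\in\muprime\subset F$ and $\sigma_F$ acts on $\muprime$ by $p$-th power, we have $\Li^\padic_k(z^p)=\sigma_F\bigl(\Li^\padic_k(z)\bigr)$; hence $\Li^{(p)}_m(z)=\bigl(1-\tfrac{\sigma_F}{p^m}\bigr)\Li^\padic_m(z)$ as an element of $F$ (with the $z=1$ convention matching the one already fixed in Theorem \ref{fullformula}, where $\Li^{(p)}_m(1)$ stands for $L_p(m,\omega^{1-m})$). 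Plugging this into the trace gives exactly the asserted formula. The bulk of the work is thus already done inside Theorem \ref{fullformula}; this corollary is essentially a bookkeeping step combining that theorem with Proposition \ref{prop2.2} and the definition of $\Li^{(p)}_m$.

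The one place requiring a little care—and the closest thing to an obstacle—is the hypothesis $m\ge 2$: Proposition \ref{prop2.2} as stated gives $\ell i_m(z)(\sigma)=(-1)^{m-1}\tilchi^z_m(\sigma)/(m-1)!$ for all $m\ge 1$, but Theorem \ref{fullformula}(2) provides the needed formula for $\tilchi^z_m$ only when $m\ge 2$ (the case $m=1$ being genuinely different, as $\tilchi^z_1$ involves the full integral over $\Z_p$ rather than over $\Z_p^\times$, and is treated separately in Proposition \ref{m=1} referenced in Theorem \ref{mainthm}). So I would restrict to $m\ge 2$ here, state the $z=1$ normalization convention explicitly as in the ambient theorem, and note that the verification that $\sigma$ in the image of $\rec_\infty$ indeed lies in $G_{F(\zeta_{p^\infty},z^{1/p^\infty})}=G_{F_\infty}$ is immediate from $\rec_\infty(\cU_\infty(F))\subset G_{F_\infty}^\ab$. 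No further estimates are needed.
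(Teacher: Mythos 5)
Your proposal is correct and follows essentially the same route as the paper: combine Proposition \ref{prop2.2} (valid for $\sigma\in G_{F_\infty}$ via Remark \ref{remPoly}) with Theorem \ref{fullformula}(2), then rewrite $\Li_m^{(p)}(z)=\Li_m^\padic(z)-p^{-m}\Li_m^\padic(z^p)$ using $z^p=\sigma_F(z)$ for $z\in\muprime$. The sign bookkeeping and the restriction to $m\ge 2$ match the paper's own two-line argument.
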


\begin{proof}
Theorem \ref{fullformula} (2) and Proposition \ref{prop2.2} 
enable us to derive: 
\begin{align*}
\ell i_m(z)(\sigma)
&=
\frac{(-1)^{m-1}}{(m-1)!}
\tilchi^z_m(\sigma) \\
&=
\frac{-1}{(m-1)!}
\mathrm{Tr}_{F/\Q_p}\bigl(\Li_m^{(p)}(z)\cdot \phi^{CW}_{m,F}(\sigma)
\bigr).
\end{align*}
Recalling $\Li_m^{(p)}(z)=\Li_m^\padic(z)-\frac{1}{p^m}\Li_m^\padic(z^p)$
by definition, where now $z^p=\sigma_F(z)$ for $z\in\muprime$,
we conclude the corollary. 
\end{proof}


\begin{remark}
\label{extensiontoG_F}
Let $P:=G_F^\ab$ and $U$ the image of 
$\mathcal U_\infty(F)$ in $P$.
Then, as in \cite[p.342]{B-K},
$P/U$ is isomorphic to $\hat\Z$ acted on trivially by
$G:=\Gal(F(\zeta_{p^\infty})/F)$ so as to fit in 
the canonical identification
$$
H^1(F,F(m))\cong\Hom_G(P,F(m))\cong\Hom_G(U,F(m))
$$
for $F(m):=F\otimes \Z_p(m)$ $(m\ge 1)$. 
{}From this, we obtain a canonical extension of 
the RHS of the above corollary to a 
$G$-homomorphism from $P$ to $\Q_p(m)$.
On the other hand, if we choose a path 
$\gamma:\overrightarrow{01}\nyoroto z$
so that $\ell i_m(z,\gamma) \in \Hom_G(P,\Q_p(m))$
(this is the case for $\gamma$ in Lemma \ref{specific_path}) 
then, it annihilates an inverse image $u_0$ 
of $1\in\hat\Z$ in $P$. 
Then, these two extended homomorphisms on $P$ 
turn out to coincide with each other,
as they both coincide on $U$ and kill $u_0$.

If moreover $\ell i_m(z,\gamma)$ 
is a 1-cocycle on $G_F$, then they should give the  
same cohomology class in $H^1(F, \Q_p(m))$,
as the restriction map 
from $F$ to $F(\zeta_{p^\infty})$ is injective.


\end{remark}



\section{Appendix: The Kummer level case}
\label{appendix}

For completeness, we shall examine the case
$m=0,1$ in Theorem \ref{mainthm}. 
We consider the $\ell$-adic Galois polylogarithms
$\ell i_0$, $\ell i_1$ as the Kummer 1-cocycles
$\kappa_z$, $\kappa_{1-z}$ respectively 
as in (\ref{eq2.4}), (\ref{kummer}) 
 (cf. also \cite{N-W2} \S 5.2).
On the other $p$-adic side, we may regard 
$\Li_0^\padic(z)=-\log_p(z)$, 
$\Li_1^\padic(z)=-\log_p(1-z)$
(cf. \cite{Fu}).
In fact, we have the following: 

\begin{proposition}
\label{m=1}
Let $p$ be an odd prime, and $F$ a finite unramified extension of 
$\Q_p$ with 
the Frobenius substitution $\sigma_F\in\Gal(F/\Q_p)$.
Let $F_\infty:=F(\mu_{p^\infty})$ 
and, for $a\in\cO_F^\times$, let 
$\kappa_{a}:G_{F_\infty}\to\Z_p(1)$ be
the Kummer character for $p$-power roots of
$a$.
Then, 
\begin{eqnarray}
\label{maineqn=1}
\kappa_{a}(\sigma)=
\mathrm{Tr}_{F/\Q_p}
\left(
\left\{
\left(
1-\frac{\sigma_F}{p}
\right)
\log_p(a)
\right\}
\phi^{CW}_{1,F}(\sigma)
\right)
\end{eqnarray}
holds for $\sigma\in G_{F_\infty}$,
where
$\phi^{CW}_{1,F}:G_{F_\infty}\to F\otimes\Z_p(1)$ 
the 1-st Coates-Wiles homomorphism for the local field $F$
(cf.\,Definition \ref{DefCW}).
\end{proposition}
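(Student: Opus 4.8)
The statement is essentially the $m=1$ instance of Theorem \ref{fullformula}, so the plan is to imitate that proof, tracing through where the argument simplifies (and where it formally breaks down) at weight one. First I would set up the special Coleman series: for $z=a\in\cO_F^\times$ (not a root of unity now, but a principal-ish unit after adjusting by a Teichm\"uller representative), or more precisely for the relevant $z$ attached to $a$, I would consider the degenerate analogue of $f_{z,c}(T)$ and compute its integral logarithm via Lemma \ref{lem3.1}. At $m=1$ the ``weight accelerator'' $\omega_n(m-1)=\omega_n(0)=\omega_n=\sum_{p\nmid i}\sigma_i$ acts without any twist, and the moment integral $\int_{\Z_p^\times}x^{m-1}d\mu_z=\int_{\Z_p^\times}d\mu_z$ is just the total mass, which by Lemma \ref{lem3.4} (with $k=1$, so $D^{-1}$) evaluates at $T=0$ to $\int_{\Z_p^\times}x^{-1}d\mu_z$. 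For the Koblitz measure this recovers $\Li_1^{(p)}(z)=-\log_p(1-z)+p^{-1}\log_p(1-z^p)$, and with $z^p=\sigma_F(z)$ the combination organizes into $(1-\sigma_F/p)$ applied to a logarithm. The factor $1/(m-1)!=1/0!=1$ disappears, matching the clean form \eqref{maineqn=1} with no factorial.

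The key intermediate step is the analogue of Proposition \ref{prop3.1}: evaluate the Hilbert norm residue symbol $\bigl((f_{z,c})^{\omega_n}(\zeta_{p^n}-1),\epsilon_n\bigr)_{p^n}$ by Coleman's reciprocity law (Theorem \ref{ColemanFormula}), getting $\zeta_{p^n}$ to the power $\mathrm{Tr}_{F/\Q_p}\bigl(\int_n \omega_n\cLog(f_{z,c})\cdot D\cLog(\sigma_F^{-n}g_\epsilon)\bigr)$, then convert $\int_n\circ\,\omega_n$ of a $\bTrace=0$ series into evaluation at $T=0$ of $D^{-1}\cLog(f_{z,c})$ against $D\cLog(\sigma_F^{-n}g_\epsilon)|_{T=0}=\sigma_F^{-n}(1-\sigma_F)\phi^{CW}_{1,F}(\epsilon)$ — here I use $D^1\cLog(g_\epsilon)|_{T=0}=(1-p^{m-1}\sigma_F)\phi^{CW}_{m,F}$ from \eqref{defCW} specialized to $m=1$, i.e. $(1-\sigma_F)\phi^{CW}_{1,F}$. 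Wait — this does not literally give the claimed $(1-\sigma_F/p)$; the point is that at weight $m$ one has $(1-p^{m-1}\sigma_F)$ on the $\phi^{CW}$ side but $(1-\sigma_F/p^m)$ on the $\Li$ side after pulling the Euler factor through, via the identity $\Li_m^{(p)}(z)=(1-p^{-m}\sigma_F)\Li_m^\padic(z)$ combined with $p^{m-1}\sigma_F$ acting on the other factor and a $\mathrm{Tr}_{F/\Q_p}$-adjointness of $\sigma_F$. At $m=1$, $p^{m-1}=1$, so $(1-\sigma_F)\log_p(a)$ paired against $\phi^{CW}_{1,F}$ re-expresses as $(1-\sigma_F/p)\log_p(a)$ paired against $\phi^{CW}_{1,F}$ up to the Frobenius shuffle inside the trace — this bookkeeping is exactly the ``replacing $\sigma_F(\alpha_{k+1})$'' manipulation used at the end of the proof of Theorem \ref{fullformula}(2), now with period considerations trivial. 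Then I would divide out the $(c^{1-m}-1)=(c^0-1)$... which is \emph{zero} at $m=1$, signalling the real subtlety.

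\textbf{The main obstacle.} The $c$-regularization is genuinely degenerate at $m=1$: the factor $(1-\bar c^{m-1})$ on the LHS of the analogue of \eqref{HojoGodo} vanishes, so one cannot simply ``divide by it and pass to the limit.'' The remedy is that the Kummer character $\kappa_a$ does not need the Soul\'e-type antisymmetrization at all — the relevant product $\prod_{p\nmid a}(1-z^{1/p^n}\zeta_{p^n}^a)^{a^{m-1}}$ at $m=1$ reduces (after accounting for the auxiliary structure relating $z$ and $a$, cf.\ the identification $\ell i_1=\kappa_{1-z}$ and $\ell i_0=\kappa_z$ in the Appendix preamble) to a single Kummer-type expression $\prod(1-z^{1/p^n}\zeta_{p^n}^a)$, i.e.\ a norm, directly realizing $\kappa_a$ without the $(1-\bar c)$ factor. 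So I would run the $c=1$ (or rather, the ``no-$c$'') version of Proposition \ref{prop3.1} directly: replace $f_{z,c}$ by $k_{z,1}(T)=(1+T)^{-1/2}-z(1+T)^{1/2}$ itself (Lemma \ref{lem3.1}(2) still gives $\Norm(k_{z,a})=k_{z^p,a}$, so $\Norm(k_{z,1})=\sigma_F(k_{z,1})$ after the $\muprime$-compatibility), whose $\cLog$ has logarithmic derivative $-{\mathscr F}_z^{(p)}(T)$ by the computation in the proof of Lemma \ref{lem3.1}(3) with $c$ removed. Feeding $k_{z,1}$ into Theorem \ref{ColemanFormula} with $\omega_n$ (weight $0$) gives $\kappa_a(\sigma)$ on the Galois side — matching \eqref{chiformula} at $m=1$ — and $\Li_1^{(p)}(z)\cdot(1-\sigma_F)\phi^{CW}_{1,F}(\sigma)$ on the $p$-adic side, and the final cosmetic rewriting $\Li_1^{(p)}(z)=-\log_p(1-z)+p^{-1}\log_p(1-z^p)$ together with $\Li_1^\padic(z)=-\log_p(1-z)$ and the relabelling $z\leftrightarrow$ the variable encoding $a$ delivers \eqref{maineqn=1}. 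The one place demanding real care is the dictionary between ``$z$'' (root of unity, argument of the Coleman series) and ``$a$'' (unit whose Kummer character we want): one must verify that the standard path of Lemma \ref{specific_path} and the reflection $\bar\gamma$ identify $\kappa_a$ with the $m=1$ polylogarithmic character for the appropriate $z$, so that the $m\ge 2$ machinery specializes correctly; I expect this identification, rather than any analytic estimate, to be the crux.
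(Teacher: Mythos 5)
There is a genuine gap, and it is not the point you deferred to the end as ``demanding real care'' --- it is fatal to the whole route. Proposition \ref{m=1} concerns an \emph{arbitrary} unit $a\in\cO_F^\times$, and after stripping the Teichm\"uller part the essential case is $a\in 1+p\cO_F$. The machinery of \S3--4 that you propose to imitate is anchored to a prime-to-$p$ root of unity $z\in\muprime$: the norm compatibility $\Norm(f_{z,c})=\sigma_F(f_{z,c})$ (and likewise $\Norm(k_{z,1})=\sigma_F(k_{z,1})$) holds precisely because $\sigma_F(z)=z^p$, and the Galois-side output of the reciprocity computation is the Kummer character of the very special elements $\prod_{p\nmid i}(1-z^{1/p^n}\zeta_{p^n}^i)^{1/p^n}$, i.e.\ of quantities of the shape $(1-z)/(1-z^{1/p})$. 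A principal unit $a\in 1+p\cO_F$ is never of the form $1-z$ with $z\in\muprime$ (since $1-z\not\equiv 1 \bmod p$ for $z\neq 1$), so the ``dictionary between $z$ and $a$'' you hope to supply does not exist; the approach can at best reach the finitely many units attached to roots of unity, not the statement as claimed. Even on that restricted range your no-$c$ remedy runs into two further problems: $k_{z,1}(0)=1-z\neq 1$, so $k_{z,1}\notin 1+T\cO_F\(T\)$ and Theorem \ref{ColemanFormula} does not apply as stated (and at $z=1$ there is a pole, which is exactly what the $c$-correction was curing); and the reciprocity law produces the \emph{restricted} character $\chi_1^z$, whereas $\kappa_{1-z}=\ell i_1(z)$ is the unrestricted one --- the passage between them (Lemma \ref{tilchi.and.chi}, or the telescoping with denominator $1-p^{(m-1)d}$ in the proof of Theorem \ref{fullformula}(2)) degenerates at $m=1$ for the same reason the factor $c^{1-m}-1$ does, so ``period considerations trivial'' is precisely where a division by zero hides.

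The paper proves the proposition by a different tool altogether: for $\sigma=\rec_\infty(\epsilon)$ and $a\in 1+p\cO_F$ it evaluates the Hilbert symbol $(a,\epsilon_n)_{p^n}$ by \emph{Sen's} explicit reciprocity law (\ref{SenFormula}), not Coleman's, and then proves a lemma, using only the norm relation $\Norm(g_\epsilon)=\sigma_F(g_\epsilon)$ of the Coleman power series, that
$\mathrm{Tr}_{F'/F}\bigl(\tfrac{\zeta_{p^m}}{p^m}\cdot\tfrac{\sigma_F^{-m}g_\epsilon'(\zeta_{p^m}-1)}{\epsilon_m}\bigr)
=\bigl(1-\tfrac{\sigma_F^{-1}}{p}\bigr)\phi^{CW}_{1,F}(\epsilon)$;
shuffling $\sigma_F$ through $\mathrm{Tr}_{F/\Q_p}$ then yields (\ref{maineqn=1}) on the image of $\rec_\infty$, general $a\in\cO_F^\times$ is reduced to the principal-unit case because both sides kill roots of unity, and the extension to all $\sigma\in G_{F_\infty}$ is by Remark \ref{extensiontoG_F}. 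If you want to salvage your plan, you would need a reciprocity input valid for power series (or elements) attached to arbitrary units rather than to $\muprime$ --- which is exactly the role Sen's law plays here.
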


Following the notation system in \S \ref{sec3.2}, we first
consider the case when $\sigma=\rec_\infty(\epsilon)$ 
for some norm compatible system 
$\epsilon=(\epsilon_n)_n\in \cU_\infty(F)$.
Let $g_\epsilon(T)\in\cO_F\(T\)^{\times}$ 
be the associated Coleman power series such that 
$\epsilon_n=\sigma_F^{-n}g_\epsilon(\zeta_{p^n}-1)$
for $n=1,2,\dots$.

The explicit reciprocity law of S.\,Sen computes the Hilbert
norm residue symbol by the formula${}^{\ast}$
\footnote{($\ast$)
According to \cite[II, p.69, line 12-13]{Sen}, the Hilbert symbol 
in loc. cit. coincides with that in \cite{Iw}. On the other hand,
we employ the Hilbert symbol discussed in Coleman's
papers. According to \cite[p.59, line -6]{C5}, it is the inverse
of the symbol used in \cite{Iw}. Therefore, a minus sign is 
put in the exponent of RHS in ($\ref{SenFormula}$).
}
\begin{equation}
\label{SenFormula}
(\beta,\alpha)_{p^n}=
\zeta_{p^n}^{
-\frac{1}{p^m}
\mathrm{Tr}_{F/\Q_p}
\left(
\log_p(\alpha)
\cdot
\mathrm{Tr}_{F'/F}
(\delta_m(\beta'))
\right)
}
\end{equation}
for $\alpha\in 1+(\zeta_p-1)^2\cO_F$ 
in \cite[Theorem 3 (b)]{Sen} and
more generally for $\alpha\in 1+(\zeta_p-1)\cO_F$
in \cite[II; Theorem 1]{Sen},
where
$F'=F(\zeta_{p^m})$ ($m:$big enough)
and $\beta'\in F'$
are taken so that 
$\alpha^{p^{m-n}}\in 1+(\zeta_p-1)^2\cO_F$ and $N_{F'/F}(\beta')=\beta$.
Finally $\delta_m(\beta')$ means 
$
\frac{\zeta_{p^m}}{g'(\pi)}\cdot\frac{f'(\pi)}{f(\pi)}
$
where $f(T)$, $g(T)\in \cO_F[T]$ are polynomials
with $f(\pi)=\beta$, $g(\pi)=\zeta_{p^m}$ for
any prime $\pi$ of $F'$.
Note that, in our case, we may and do set 
$\pi=\zeta_{p^m}-1$ so that $g'(\pi)=1$
(cf. \cite[Cor.\,15]{C2}).

Now, let us apply (\ref{SenFormula}) for 
$\alpha:=a\in 1+p\cO_F$ and $\beta:=\epsilon_n$.
As $(\alpha,\beta)_{p^n}=(\beta,\alpha)_{p^n}^{-1}$
(cf. \cite[p.352]{C-F}), 
writing $(a,\epsilon_n)_{p^n}=\zeta_{p^n}^{[a,\epsilon_n]}$
and letting $F'=F(\zeta_{p^m})$ as above,
we obtain the following congruence modulo $p^n$:
\begin{equation}
\label{app-arg}
[a,\epsilon_n]\equiv
\mathrm{Tr}_{F/\Q_p}
\left(\log_p(a)
\mathrm{Tr}_{F'/F}\left(
\frac{\zeta_{p^m}}{p^m}\cdot
\frac{\sigma_F^{-m}g'_{\epsilon}(\zeta_p^m-1)}{\epsilon_m}
\right)
\right).
\end{equation}

\begin{lemma}
Notations being as above, we have
$$
\mathrm{Tr}_{F'/F}\left(
\frac{\zeta_{p^m}}{p^m}\cdot
\frac{\sigma_F^{-m}g'_{\epsilon}(\zeta_p^m-1)}{\epsilon_m}
\right)
=
\left(1-\frac{\sigma_F^{-1}}{p}\right)\cdot
\phi_{1,F}^{CW}(\epsilon).
$$
\end{lemma}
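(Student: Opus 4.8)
The plan is to reduce the identity to a statement about the $\cO_F$-valued measure $\mu_\epsilon$ on $\Z_p$ attached to the logarithmic derivative $D\log g_\epsilon=Dg_\epsilon/g_\epsilon\in\cO_F\(T\)$ (integral because $g_\epsilon$ is a unit), and then to feed in the norm-coherence $\Norm(g_\epsilon)=\sigma_F(g_\epsilon)$. By \eqref{defCW}, with $1+T=e^X$ and $D=d/dX$, one has $D\log g_\epsilon(T)=\int_{\Z_p}(1+T)^x\,d\mu_\epsilon(x)$ and $\int_{\Z_p}x^k\,d\mu_\epsilon(x)=\phi^{CW}_{k+1,F}(\epsilon)$; in particular $\mu_\epsilon(\Z_p)=\phi^{CW}_{1,F}(\epsilon)$. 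First I would rewrite the summand: since $D=(1+T)\tfrac{d}{dT}$ and $1+T=\zeta_{p^m}$ at $T=\zeta_{p^m}-1$, and since $\epsilon_m=(\sigma_F^{-m}g_\epsilon)(\zeta_{p^m}-1)$ while $\sigma_F$ acts only on coefficients (so commutes with $\tfrac{d}{dT}$, $D$ and $\log$), one gets $\tfrac{\zeta_{p^m}}{p^m}\cdot\tfrac{(\sigma_F^{-m}g'_\epsilon)(\zeta_{p^m}-1)}{\epsilon_m}=\tfrac{1}{p^m}\bigl(D\log(\sigma_F^{-m}g_\epsilon)\bigr)(\zeta_{p^m}-1)$. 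As $F/\Q_p$ is unramified, $\Gal(F'/F)\cong(\Z/p^m\Z)^\times$ acts by $\zeta_{p^m}\mapsto\zeta_{p^m}^a$ and trivially on $\cO_F$, so $\mathrm{Tr}_{F'/F}$ turns this into $\tfrac1{p^m}\sum_{a\in(\Z/p^m\Z)^\times}\bigl(D\log(\sigma_F^{-m}g_\epsilon)\bigr)(\zeta_{p^m}^a-1)$. Evaluating against $D\log(\sigma_F^{-m}g_\epsilon)(T)=\int_{\Z_p}(1+T)^x\,d(\sigma_F^{-m}\mu_\epsilon)(x)$ and using the Ramanujan-type identity $\sum_{a\in(\Z/p^m\Z)^\times}\zeta_{p^m}^{ax}=p^m\,\mathbf{1}_{p^m\Z_p}(x)-p^{m-1}\,\mathbf{1}_{p^{m-1}\Z_p}(x)$ gives
\[
\mathrm{Tr}_{F'/F}\Bigl(\tfrac{\zeta_{p^m}}{p^m}\cdot\tfrac{(\sigma_F^{-m}g'_\epsilon)(\zeta_{p^m}-1)}{\epsilon_m}\Bigr)=\sigma_F^{-m}\bigl(\mu_\epsilon(p^m\Z_p)\bigr)-\tfrac1p\,\sigma_F^{-m}\bigl(\mu_\epsilon(p^{m-1}\Z_p)\bigr).
\]

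The key input is then to control $\sigma_F^{-k}\bigl(\mu_\epsilon(p^k\Z_p)\bigr)$. Taking $D\log$ of the characterizing identity $([p]\Norm f)(T)=\prod_{\xi\in\mu_p}f(\xi(1+T)-1)$ and using injectivity of $[p]$, one obtains the operator identity $D\log\circ\Norm=\bTrace\circ D\log$ on $\cO_F\(T\)^\times$; iterating with $\Norm(g_\epsilon)=\sigma_F(g_\epsilon)$ yields $\sigma_F^{k}(D\log g_\epsilon)=\bTrace^{k}(D\log g_\epsilon)$ for all $k\ge1$. On the measure side, the characterization of $\bTrace$ together with the same root-of-unity sum shows that if $\mu$ is the measure of $f$ then the measure of $\bTrace f$ is the pushforward of $\mu|_{p\Z_p}$ along $x\mapsto x/p$; hence the measure of $\bTrace^{k}f$ has total mass $\mu(p^k\Z_p)$. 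Since $\sigma_F$ commutes with $\bTrace$, the relation $D\log g_\epsilon=\bTrace^{k}\bigl(\sigma_F^{-k}(D\log g_\epsilon)\bigr)$ forces $\mu_\epsilon(\Z_p)=(\sigma_F^{-k}\mu_\epsilon)(p^{k}\Z_p)=\sigma_F^{-k}\bigl(\mu_\epsilon(p^{k}\Z_p)\bigr)$, i.e.\ $\sigma_F^{-k}\bigl(\mu_\epsilon(p^{k}\Z_p)\bigr)=\phi^{CW}_{1,F}(\epsilon)$ for every $k\ge0$. Applying this with $k=m$ to the first term on the right above, and with $k=m-1$ (after pulling out a factor $\sigma_F^{-1}$) to the second, collapses it to $\phi^{CW}_{1,F}(\epsilon)-\tfrac{\sigma_F^{-1}}{p}\phi^{CW}_{1,F}(\epsilon)=\bigl(1-\tfrac{\sigma_F^{-1}}{p}\bigr)\phi^{CW}_{1,F}(\epsilon)$, which is the assertion.

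The step I expect to be the main obstacle is the norm/trace dictionary of the second paragraph: rigorously establishing that $\Norm(g_\epsilon)=\sigma_F(g_\epsilon)$ forces $\sigma_F^{-k}\bigl(\mu_\epsilon(p^k\Z_p)\bigr)$ to be independent of $k$ — this is essentially Iwasawa's recovery of $\phi^{CW}_{1,F}$ from norm-coherent units (cf.\ \cite{C2}, \cite{C5}) — and, hand in hand with it, keeping all powers of $\sigma_F$ consistent with the Bloch--Kato normalization used in \eqref{ColemanSeries} (which is the reason $\sigma_F^{-1}$, rather than $\sigma_F$, appears on the right-hand side). The remaining manipulations with Iwasawa power series and their measures are routine.
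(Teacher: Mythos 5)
Your proof is correct and is essentially the paper's own argument in measure-theoretic dress: your key claim $\sigma_F^{-k}\bigl(\mu_\epsilon(p^k\Z_p)\bigr)=\phi^{CW}_{1,F}(\epsilon)$ is exactly the identity the paper derives by iterating $\Norm(g_\epsilon)=\sigma_F(g_\epsilon)$ to $(\sigma_F^k g_\epsilon)((1+T)^{p^k}-1)=\prod_{i}g_\epsilon(\zeta_{p^k}^i(1+T)-1)$ and taking the logarithmic derivative at $T=0$. Likewise, your character-sum identity $\sum_{a\in(\Z/p^m\Z)^\times}\zeta_{p^m}^{ax}=p^m\mathbf{1}_{p^m\Z_p}(x)-p^{m-1}\mathbf{1}_{p^{m-1}\Z_p}(x)$ implements the same decomposition of the trace (full sum over residues mod $p^m$ minus the sum over multiples of $p$) that the paper uses, so the two proofs coincide step for step.
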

\begin{proof}
Since $\Norm(g_\epsilon)=\sigma_F g_\epsilon$, we have
$(\sigma_Fg_\epsilon)((1+T)^p-1)=\prod_{i=0}^{p-1}
g_\epsilon(\zeta_p^i(1+T)-1)$.
Substituting $\zeta_{p^k}^j(1+T)-1$ or $(1+T)^{p^k}-1$
( $0\le j\le p-1$, $k=1,2,...$) for $T$ in it,
and combining resulted equations in certain multiple ways,
one obtains
\begin{equation}
(\sigma_F^k g_\epsilon)((1+T)^{p^k}-1)
=\prod_{i=0}^{p^k-1}
g_\epsilon(\zeta_{p^k}^i(1+T)-1) 
\qquad (k\ge 1).
\end{equation}
Taking derivatives of both sides and putting $T=0$, 
we obtain then
\begin{equation}
\label{5.5eq}
p^k\phi_{1,F}^{CW}(\epsilon)
=\sum_{i=0}^{p^k-1}\zeta_{p^k}^i
\frac{\sigma_F^{-k}g'_\epsilon(\zeta_{p^k}^i-1)}
{\sigma_F^{-k}g_\epsilon(\zeta_{p^k}^i-1)}
=
\sum_{i=0}^{p^k-1}\zeta_{p^k}^i
\frac{\sigma_F^{-k}g'_\epsilon(\zeta_{p^k}^i-1)}
{\epsilon_k}.
\end{equation}
Using (\ref{5.5eq}) for $k=m$, $m-1$, we see that
the LHS of Lemma equals to
\begin{align*}
&\frac{1}{p^m}\sum_{\substack{0\le i<p^m\\ p\nmid i}}
\frac{\sigma_F^{-m}g'_\epsilon(\zeta_{p^m}^i-1)}
{\sigma_F^{-m}g_\epsilon(\zeta_{p^m}^i-1)}  \\
=&
\frac{1}{p^m}
\left(\sum_{i=0}^{p^m}
\frac{\sigma_F^{-m}g'_\epsilon(\zeta_{p^m}^i-1)}
{\sigma_F^{-m}g_\epsilon(\zeta_{p^m}^i-1)}
-
\sum_{j=0}^{p^{m-1}}
\sigma_F^{-1}\Bigl(
\frac{\sigma_F^{-(m-1)}g'_\epsilon(\zeta_{p^{m}}^{pj}-1)}
{\sigma_F^{-(m-1)}g_\epsilon(\zeta_{p^m}^{pj}-1)}
\Bigr)\right) 
 \\
=&
\phi_{1,F}^{CW}(\epsilon)
-
\frac{1}{\,p\,}\,
\sigma_F^{-1}\Bigl(
\phi_{1,F}^{CW}(\epsilon)\Bigr).
\end{align*}
This completes the proof of the lemma.
\end{proof}

Plugging this lemma into (\ref{app-arg}), we find
\begin{align*}
[a,\epsilon_n]
&\equiv
\mathrm{Tr}_{F/\Q_p}
\left(\log_p(a)\phi_{1,F}^{CW}(\epsilon)
-\frac{\log_p(a)}{p}
\sigma_F^{-1}\Bigl(\phi_{1,F}^{CW}(\epsilon)\Bigr)
\right) \\
&\equiv\mathrm{Tr}_{F/\Q_p}
\left(\log_p(a)\phi_{1,F}^{CW}(\epsilon)
-\sigma_F^{-1}\left\{
\sigma_F\Bigl(
\frac{\log_p(a)}{p}
\phi_{1,F}^{CW}(\epsilon)
\Bigr)\right\}
\right) \\
&\equiv
\mathrm{Tr}_{F/\Q_p}
\left(\log_p(a)\Bigl(1-\frac{\sigma_F}{p}\Bigr)
\phi_{1,F}^{CW}(\epsilon)
\right) 
\qquad (\mathrm{mod}\ p^n).
\end{align*}
This settles Proposition \ref{m=1} in the case 
$a\in 1+p\cO_F$ and $\sigma=\rec_\infty(\epsilon)$,
since by definition $\kappa_a(\sigma)\equiv [a,\epsilon_n]$
mod $p^n$.
The general case for $a\in \cO^\times$ follows 
immediately from observing that 
$\log_p(a)=\log_p(a')$ when $a=\zeta a'$ 
for any $\zeta\in\mu_p$.
The extension of the statement to all 
$\sigma\in G_{F(\zeta_{p^\infty})}$
follows from Remark \ref{extensiontoG_F}.
This settles the proof of Proposition \ref{m=1}. 
\qed



\begin{thebibliography}{99}

\bibitem[A]{A} G. Anderson,
{\em The hyperadelic gamma function}, 
{Invent. Math.}, {\bf 95} (1989), 63--131.

\bibitem[B-K]{B-K} S. Bloch and K. Kato, 
{\em $L$-functions and Tamagawa numbers of motives}, The Grothendieck Festschrift  Vol. I,
(P.Cartier et. al. eds.), {Progr. Math.}, {\bf 86} (1990), 
333--400.

\bibitem[C1]{C1}R. Coleman, {\em Division values in local fields}, 
{Invent. Math.} {\bf 53} (1979), 91--116.

\bibitem[C2]{C2}R. Coleman, {\em The dilogarithm and the norm residue symbol}, 
{Bull. Soc. Math. France} {\bf 109} (1981), 
373--402. 

\bibitem[C3]{C3}R. Coleman, {\em Dilogarithms, regulators, and $p$-adic L-functions}, 
{Invent. Math.}, {\bf 69} (1982), 171--208.

\bibitem[C4]{C4}R. Coleman, {\em Local units modulo circular units}, 
{Proc. Amer. Math. Soc.} {\bf 89} (1983), 1--7. 

\bibitem[C5]{C5}R. Coleman, {\em Anderson-Ihara theory: Gauss sums and circular units}, 
in {``Algebraic number theory -- in honor of K.Iwasawa''}
(J.Coates, R.Greenberg, B.Mazur, I.Satake eds.), 
{Adv. Studies in Pure Math.}, {\bf 17} (1989), 55--72. 

\bibitem[C-F]{C-F}
J. W. S. Cassels, A. Fr\"ohlich,
{\em Algebraic Number Theory}, 2nd edition (2010),
London Mathematical Society.

\bibitem[C-S]{C-S}J. Coates and R. Sujatha, Cyclotomic fields and zeta values, Springer-Verlag, Berlin, 2006. 

\bibitem[De]{De} P. Deligne, 
{\em Le groupe fondamental de la droite projective moins trois points}, 
in {``Galois groups over $\Q$''} (Y.Ihara, K.Ribet, J.-P.Serre eds.), 
{Math. Sci. Res. Inst. Publ.}, {\bf 16} (1989), 79--297.


\bibitem[I-S]{I-S}F. Ichimura and K. Sakaguchi, 
{\em The nonvanishing of a certain Kummer character $\chi_m$ (after C. Soul\'e), 
and some related topics}, 
in {``Galois representations and arithmetic algebraic geometry''}
(Y. Ihara ed.), 
 {Adv. Studies in Pure Math.}, {\bf 12} (1987), 53--64.


\bibitem[I86]{I86} Y. Ihara, 
{\em Profinite braid groups, Galois representations and complex multiplications}, 
{Ann. of Math.} (2) {\bf 123} (1986), 43--106.

\bibitem[I90]{I90} Y. Ihara, 
{\em Braids, Galois groups, and some arithmetic functions},
Proc. Intern. Congress of Math. Kyoto 1990, 99--120.


\bibitem[IKY]{IKY} Y. Ihara, M. Kaneko, A. Yukinari, 
{\em On some properties of the universal power series for Jacobi sums}, 
in {``Galois representations and arithmetic algebraic geometry''}
(Y. Ihara ed.), 
 {Adv. Studies in Pure Math.}, {\bf 12} (1987),
65--86.

\bibitem[Iw]{Iw} K. Iwasawa,
{\em On explicit formulas for the norm residue symbol},
J. Math. Soc. Japan, {\bf 20} (1968), 151--165.

\bibitem[Fu]{Fu}H. Furusho, {\em $p$-adic multiple zeta values 
I: $p$-adic multiple polylogarithms and the $p$-adic KZ equation}, 
{Invent. Math.}, {\bf 155} (2004), 223--286;
{\em II: Tannakian interpretations}, {Amer. Journal of Math.}, {\bf 129} (2007), 1105--1144.

\bibitem[Gr]{Gr} M. Gros, 
{\em R\'egulateurs syntomiques et valeurs de fonctions $L$ $p$-adiques, I}, 
(with an appendix by Masato Kurihara), 
{Invent. Math.}, {\bf  99} (1990), 293--320; 
{\em II}, {Invent. Math.}, {\bf 115} (1994), 61--79. 


\bibitem[Kim]{Kim} M. Kim, {\em The unipotent Albanese map and Selmer varieties for curves}, 
{Publ. Res. Inst. Math. Sci.}, {\bf 45} (2009), 89--133.

\bibitem[Ko]{Ko}N. Koblitz,
{\em  A new proof of certain formulas for $p$-adic L-functions}, 
{Duke Math. J.} {\bf 46} (1979),
455--468.

\bibitem[KN]{KN} M. Kolster, T. Nguyen Quang Do,
{\em Syntomic regulators and special values of $p$-adic $L$-functions},
{Invent. math.}, {\bf 133} (1998), 417--447.


\bibitem[Ku]{Ku} M. Kurihara, 
{\em Computation of the syntomic regulator in the cyclotomic case}, Appendix to [Gr, I].


\bibitem[NSW]{NSW} H. Nakamura, K. Sakugawa, Z.Wojtkowiak, 
{\em Polylogarithmic analogue of the Coleman-Ihara formula, II},
{\it Preprint avaliable at} : 
https:/\!/sites.google.com/site/kenjisakugawashomepage/reserch-articles


\bibitem[N-W1]{N-W1} H. Nakamura and Z. Wojtkowiak, 
{\em On explicit formulae for l-adic polylogarithms}, 
in {``Arithmetic fundamental groups and noncommutative algebra''},
(M.Fried, Y.Ihara eds.)
{Proc. Sympos. Pure Math.}, {\bf 70} (2002), 285--294.

\bibitem[N-W2]{N-W2} H. Nakamura and Z. Wojtkowiak, 
{\em Tensor and homotopy criteria for functional equations of $l$-adic and classical iterated integrals}, 
in {``Non-abelian fundamental groups and Iwasawa theory''}
(J.Coates, M.Kim, F.Pop, M.Sa\"idi, P.Schneider eds.), 
{London Math. Soc. Lecture Note Ser.}, {\bf 393} (2012), 258--310.

\bibitem[Ol]{Ol} M. C. Olsson,
{\em Towards Non-Abelian p-adic Hodge Theory in the Good Reduction Case},
Memoirs of A.M.S. {\bf 210}, 2011.

\bibitem[Sak]{Sak} K. Sakugawa, 
{\em On non-abelian generalization of the Bloch-Kato
exponential map}, 
{\it Preprint avaliable at} : 
https:/\!/sites.google.com/site/kenjisakugawashomepage/reserch-articles

\bibitem[Sen]{Sen} S. Sen,
{\em On explicit reciprocity laws},
J. reine anew. math. {\bf 313} (1980), 1--26;
{\em Part II},  
J. reine anew. math. {\bf 323} (1981), 68--87.

\bibitem[Sou]{Sou} C. Soul\'e,
{\em On higher p-adic regulators}, 
Lecture Notes in Mathematics, {\bf  854} (1981), 
372--401.

\bibitem[Wa]{Wa} L. C. Washington, Introduction to Cyclotomic Fields, 2nd Edition,
Springer 1997.

\bibitem[W1]{W1}  Z. Wojtkowiak, 
{\em On $l$-adic iterated integrals I: Analog of Zagier conjecture}, {Nagoya Math. J.} 
{\bf 176} (2004), 113--158.

\bibitem[W2]{W2}  Z. Wojtkowiak, 
{\em On $l$-adic iterated integrals II: Functional equations and l-adic polylogarithms}, 
{Nagoya Math. J.} {\bf 177} (2005), 117--153. 

\bibitem[W3]{W3}Z. Wojtkowiak, {\em On $l$-adic Galois L-functions}, (preprint 2014) 
arXiv:1403.2209.


\end{thebibliography}
\end{document}